\DeclareMathAlphabet{\mathpzc}{OT1}{pzc}{m}{it}
\theoremstyle{plain}
\newtheorem{theorem}{\scshape Theorem}
\newtheorem{lemma}[theorem]{\scshape Lemma}
\newtheorem{corollary}[theorem]{\scshape Corollary}
\theoremstyle{definition}
\newtheorem{remark}{\scshape Remark}
\theoremstyle{definition}
\def\bbN{{\mathbb N}}
\def\bbR{{\mathbb R}}
\def\I{{\mathcal I}}
\def\K{{\mathcal K}}
\def\P{{\mathcal P}}
\def\R{{\mathcal R}}
\def\U{{\mathcal U}}
\def\V{{\mathcal V}}
\def\rE{{\rm E}}
\def\rO{{\rm O}}
\def\rT{{\rm T}}
\def\rk{{\rm k}}
\def\f{\text{\bf\emph{f}}}
\def\g{\text{\bf\emph{g}}}
\def\p{\text{\bf\emph{p}}}
\def\u{\text{\bf\emph{u}}}
\def\w{\text{\bf\emph{w}}}
\def\mC{{\mathscr C}}
\def\Varphi{{\boldsymbol\varphi}}
\definecolor{grey}{rgb}{0.5,0.5,0.5}
\definecolor{lightgrey}{rgb}{0.9,0.9,0.9}
\definecolor{darkgreen}{rgb}{0,0.6,0}
\definecolor{orange}{rgb}{1,0.5,0}
\DeclareMathOperator{\loc}{loc}
\def\curl{{\operatorname{curl}}}
\def\supp{{\operatorname{spt}}}
\def\hol{H^1_{\loc}( \mathbb{R}^2  ) }
\def\htl{H^2_{\loc}( \mathbb{R}^2  ) }
\def\cls #1{\overline{#1}}
\def\id{{\text{\rm Id}}}
\def\cptsubset{\hspace{1pt}{\subset\hspace{-2pt}\subset}\hspace{1pt}}
\def\qed{\hfill{$\square$}}
\def\bp{{\overline{\partial}\hspace{1pt}}}
\def\p{{\partial\hspace{1pt}}}
\def\n{{\rm n}}
\def\loc{{\operatorname{loc}}}
\def\cptspt{{c}}
\def\Forall{\forall\hspace{2pt}}
\def\Rn{{\bbR^{\hspace{0.2pt}\n}}}
\def\comm#1#2{{\llbracket#1,#2\rrbracket}}
\def\comm#1#2{{\{ #1,#2 \} }}
\def\({{(\hspace{-2pt}(}}
\def\){{)\hspace{-2pt})}}
\def\smallexp#1{{\text{\small #1}}}
\def\jump#1{{[\hspace{-2pt}[#1]\hspace{-2pt}]}}
\def\bigjump#1{{\big[\hspace{-3pt}\big[#1\big]\hspace{-3pt}\big]}}
\def\Bigjump#1{{\Big[\hspace{-4.5pt}\Big[#1\Big]\hspace{-4.5pt}\Big]}}
\def\cvl{{\,\convolution\,}}
\def\novertwo{\text{\small $\displaystyle{}\frac{\n}{2}$}}
\def\XXint#1#2#3{{\setbox0=\hbox{$#1{#2#3}{\int}$}
\vcenter{\hbox{$#2#3$}}\kern-.5\wd0}}
\title{Regularity of the velocity field for Euler vortex patch evolution}
\author{
Daniel Coutand
\\Department of Mathematics,
\\Heriot-Watt University
\\Edinburgh, EH14 4AS  UK
\\{\footnotesize email: d.coutand@ma.hw.ac.uk}
\and Steve Shkoller
\\Department of Mathematics
\\University of California
\\Davis, CA 95616 USA
\\{\footnotesize email: shkoller@math.ucdavis.edu}
}
\date{October 14, 2015}
\begin{document}

\maketitle

\noindent
{\bf Abstract.} {\small  We consider the vortex patch problem for both the 2-D and 3-D incompressible Euler equations.
 In 2-D, we prove that for vortex patches with $H^{k-0.5}$ Sobolev-class contour regularity, $k \ge 4$,
the velocity field  on both sides of the vortex patch boundary has $H^k$ regularity for all time.   In 3-D, we establish existence of solutions
to the vortex patch problem on a finite-time interval $[0,T]$, and
 we simultaneously establish the 
$H^{k-0.5}$ regularity of the two-dimensional  vortex patch boundary, as well as the $H^k$ regularity of the velocity fields on both sides of vortex patch boundary, for $k \ge 3$.}

\tableofcontents

\section{Introduction}
\subsection{The incompressible Euler equations}
Global existence for the Euler 2-D vortex patch problem was  first established by Chemin \cite{Chemin1993, Chemin1995}, Bertozzi \& Constantin \cite{BeCo1993}, and Serfati \cite{Serfati}; see also the recent article by Bae \& Kelliher \cite{BaKe2014}.   Local existence for
the 3-D vortex patch problem was proved by Gamblin \& Saint Raymond \cite{GaSa1995}.

 We are interested in the regularity
properties of the velocity field associated to the vortex patch evolution.
In particular, we analyze the   incompressible Euler equations on $ \mathbb{R}^\n  $, $\n=2,3$,  written as
 \begin{subequations}
 \label{euler}
\begin{align} 
u_t + \nabla_uu +\nabla p & =0 \,,\\
\operatorname{div} u & =0 \,,
\end{align} 
 \end{subequations}
 where $u(x,t)$ is the velocity vector field and $p(x,t)$ is  the pressure function, where the advection term $\nabla_uu$ denotes
  $\sum_{j=1}^\n  \frac{\partial u}{\partial x_j} u^j$.
%
 
\subsection{The 2-D vortex patch problem}\label{2dpatch}
Letting $\nabla ^\perp = (- \partial_{x_2}, \partial_{x_1})$, we define the 2-D vorticity function 
 $\omega (x,t) =  \nabla ^\perp \cdot  u(x,t) = u^2,_1 - u^1,_2$.  The vorticity $\omega$ is transported and satisfies
 \begin{equation}\label{vorticity}
 \omega _t + \nabla_u \omega =0 \,.
\end{equation} 
 Letting $\psi(x,t)$ denote the stream function, given by $u = \nabla ^\perp \psi$, we have that
 $\Delta \psi = \omega$,
 so that $ \psi(x,t) = {\frac{1}{2\pi}} \int_{\mathbb{R}^2  } \log|x-y| \omega (y) dy$.
Thanks to the Biot-Savart kernel $K(x) = {\frac{1}{2\pi}} \nabla ^\perp \log|x|$,
 \begin{equation}\label{BS}
u(x,t) = \int_{ \mathbb{R}^2  } K(x-y) \omega (y) dy \,.
\end{equation}

For each time $t \in [0, \infty )$, 
 let $\Omega^+(t)$ denote an open, simply-connected, and bounded subset of $ \mathbb{R}^2  $ with boundary
 $\Gamma(t):=\partial \Omega^+(t)$ given by a closed curve which is
 diffeomorphic to the circle $ \mathbb{S}  ^1$.   Let $\Omega^-(t)$ denote 
 $\overline{\Omega^+(t)}^ c$.  
The {\it 2-D vortex patch problem} consists of the following initial data for the Euler equations:
\begin{equation}\label{vp2d}
\omega _0(x) =\left\{
\begin{array}{ll}
1\,, & x \in \overline{\Omega^+(0)} \\
0\,, & x \in \Omega^-(0)\,.
\end{array}
\right.
\end{equation} 
The time-dependent open set $\Omega^+(t)$  is thus termed the {\it vortex patch}; the vortex patch boundary $\Gamma(t):=\partial \Omega^+(t)$ moves 
with the velocity of the fluid,
given by
$
u(x,t) = \int_{ \Omega^+(t) } K(x-y) dy $.  It follows that
\begin{equation}\label{problematic}
\nabla u(x,t) =  \int_{ \Omega^+(t) }  \nabla K(x-y) dy \,.
\end{equation} 

Given an initial 2-D vortex patch boundary $\Gamma(0)$ of H\"{o}lder class $\mC^{k, \alpha }$, it was established by Chemin \cite{Chemin1993}
and Bertozzi \& Constantin \cite{BeCo1993} that a unique solution exists for all time,  that the $\mC^{k, \alpha }$ contour regularity propagates,
 and that the gradient of  the velocity 
remains bounded for all time.
Their  proof of $\mC^{k, \alpha }$ contour regularity (in 2-D) can also be used to establish  $H^k$ contour regularity (we provide a
proof for the $\n$-dimensional case, $\n=2$ or $3$ in Section \ref{sec5}), 
and we state one of their fundamental results as
follows: Given an initial vortex patch boundary $\Gamma(0)$ of 
class $H^{k-0.5}$, $k \ge 3$, for all $t\in [0, \infty )$, there  exists a unique solution to the vortex patch problem, with non self-intersecting
boundary
$\Gamma(t)$, and satisfying the following estimate:
\begin{align} 
\label{bounds}
\frac{1}{|z|_*(t)}+ \| z( \cdot , t)\|_{H^{k-0.5}( \mathbb{S}  ^1)} + \| \nabla u ( \cdot , t) \|_{ L^ \infty ( \mathbb{R}^2  )}  & \le F(t) \,,
\end{align} 
where $z(\cdot ,t): \mathbb{S}  ^1 \to \Gamma(t)$ denotes an $H^{k-0.5}$-class parameterization of the vortex patch boundary 
$\Gamma(t)$, 
\begin{equation}\label{chordarc}
|z|_*(t) = \inf_{\theta_1\neq \theta_2} \frac{ | z(\theta_1, t) - z(\theta_2,t)| }{|\theta_1 - \theta_2|} \,,
\end{equation} 
 and $0< F(t) < \infty $ for any $t< \infty $.      We see that (\ref{bounds}) provides a strictly positive lower-bound on $|z|_*(t)$ which, in turn,
 provides a strictly positive lower bound for the metric $|\partial_\theta z(\theta)|$ and ensures that $\Gamma(t)$ does not self-intersect (see, for example,
 Majda \& Bertozzi \cite{MaBe2002}).  We identity $ \mathbb{S}  ^1$ with the interval $[0, 2 \pi]$.

\subsection{The 3-D vortex patch problem}\label{3dpatch}
 In three space dimensions, the 3-D vorticity $ \omega  = \operatorname{curl} u$ is a vector field,
and satisfies the 
vector equation
 \begin{equation}\label{vorticity3d}
 \omega _t + \nabla_u \omega =  \nabla _ \omega u \,,
\end{equation} 
where in components and for each $i=1,2,3$,  $[\nabla_u \omega]^i =\sum_{j=1}^3 \frac{ \partial \omega^i}{ \partial x_j} u^j$ and 
 $[\nabla_ \omega u ]^i =\sum_{j=1}^3 \frac{ \partial u^i}{ \partial x_j} \omega^j$.

 Letting $\psi(x,t)$ denote the vector stream function, given by $u =- \operatorname{curl} \psi$, we have that
 $\Delta \psi = \omega$, and hence  $\psi(x) = {\frac{1}{4\pi}} \int_{ \mathbb{R}^3  } \frac{\omega(y)}{|x-y|} dy$.  It follows that
 \begin{equation}\label{BS3d}
u(x,t) = \int_{ \mathbb{R}^2  } \K(x-y) \omega (y) dy \,,
\end{equation} 
where $\K(x) = {\frac{1}{4\pi}}  {\frac{x \times  \cdot }{|x|^3}}$ is the  Biot-Savart 3x3 matrix kernel.

What  type of vortex evolution in three space dimension is analogous to the 2-D vortex patch problem?  The answer is as follows: we suppose
that at time $t=0$,
 $\Omega^+(0)$ denotes an open bounded subset of $ \mathbb{R}^3$ which is 
diffeomorphic to the open unit ball $B=\{ x \in \mathbb{R}^3  \ : \ |x| < 1\}$.  We then let $\Gamma(0)=\partial\Omega^+(0)$,
and  define $\Omega^-(0) = \overline{\Omega^+(0)}^c$.
We choose an initial  divergence-free velocity field $u_0(x) = u^+_0 (x) {\bf 1}_{ \overline{\Omega ^+(0)}} + u_0 ^- (x) {\bf 1}_{ \Omega ^-(0)}$
 such that 
the initial vorticity vector $ \omega_0 = \operatorname{curl} u_0 \in L^ \infty ( \mathbb{R}^3  )$ and satisfies
\begin{subequations}
\label{vp3d-general}
\begin{align} 
\omega_0(x) & = \left\{
\begin{array}{ll}
\operatorname{curl} u_0^+(x)\,, & x \in \overline{\Omega^+(0)} \\
\operatorname{curl} u_0^-(x)\,, & x \in \Omega^-(0) 
\end{array}
\right. \,, \\
\jump{ \omega_0 \cdot n(\cdot , 0)} & =0 \,, \\
\jump{\omega_0 \times  n( \cdot , 0)} & \neq 0 \,, 
\end{align} 
\end{subequations} 
 where $ n(\cdot , 0)$ denotes the outward unit normal to
$\partial \Omega^+(0)$.   The velocity $u_0$  is continuous
across $\Gamma(0)$ while the tangential components of  $\omega_0$ are discontinuous.    The 3-D analogue of a 2-D vortex patch amounts to
choosing $u_0$ in such a way that $\curl u_0^- =0$ on $  \Omega^-(0)$ and hence, necessarily, $ \operatorname{curl} u_0^+ \cdot n(0) =0$
so that $ \omega_0$ is tangent to $\Gamma(0)$.

To explain this analogy, we first state the following existence theorem for the
Euler equations (\ref{euler}) with initial data $u(x,0) = u_0(x)$.    Gamblin \& Saint Raymond \cite{GaSa1995} proved that whenever
$\Gamma(0)$ is  $\mC^{1, \alpha }$, $ \alpha \in (0,1)$, $u_0 \in L^p( \mathbb{R}^3  )$, $1< p < \infty $, 
and $\omega_0 \in L^q( \mathbb{R}^3  )$, $1\le q < 3$ such that $\omega_0$ has $\mC^ \alpha $ regularity in directions tangent to 
$\Gamma(0)$, then there exists a unique solution $u \in L^ \infty (0,T; W^{1, \infty}( \mathbb{R}^3  )) \cap W^{1, \infty}(0,T; 
L^p( \mathbb{R}^3  ))$ to (\ref{euler}).   Furthermore, letting $ \eta(x,t)$ denote the Lagrangian flow of $u$, so that
\begin{subequations}
\label{lag-flow}
\begin{alignat}{2} 
\partial_t \eta (x,t) & = u(\eta(x,t),t) && \ \text{ for } \ t>0 \,, \\
\eta (x,0) & =x \,,&&
\end{alignat} 
\end{subequations}
and for each $t \in (0,T]$,  setting $\Gamma(t) = \eta( \Gamma(0), t)$, then $\Gamma(t)$ is a closed surface of class $\mC^{1, \alpha }$ and $\omega(t) \in 
L^q( \mathbb{R}^3  )$ such that  $\omega(t)$ has $\mC^ \alpha $ regularity in directions tangent to 
$\Gamma(t)$.

For each $t \in [0,T]$, the Lagrangian flow $\eta( \cdot ,t )$ is a  diffeomorphism with Jacobian determinant 
$\det \nabla \eta(x,t) =1$.   We set $\Omega^+(t) = \eta( \Omega^+(0), t)$ and $\Omega^-(t) = \eta( \Omega^-(0), t)$.
  Integrating the vorticity equation (\ref{vorticity3d}), we see that
\begin{equation}\label{lag_vor}
\omega( \eta(x,t),t) = \nabla \eta(x,t)  \cdot  \omega_0(x) \,,
\end{equation} 
where in components, $[\nabla \eta  \cdot  \omega_0]^i = \sum_{j=1}^3\frac{\partial \eta^i}{ \partial x_j} \omega_0^j$.   

We will set the 3-D vortex patch problem inside of a periodic box.
We let $\Omega$  denote  a periodic box $[- \ell ,\ell]^3$  in $ \mathbb{R}  ^3$ with opposite sides of the
box  identified with one another, and with $\ell$ taken sufficiently large so that $\overline{\Omega^+(0)} \subset \Omega$.
  Functions defined on $\Omega$ are $2\ell$-periodic in each of the three coordinate directions, i.e.,
$$
u(x + 2\ell e_i) = u(x) \ \ \forall x \in \mathbb{R}^3  , i=1,2,3 \,,
$$
were $e_1=(1,0,0)$, $e_2=(0,1,0)$ and $e_3=(0,0,1)$.

The {\it 3-D vortex patch problem}
has the following  initial data: 
\begin{subequations}
\label{vp3d}
\begin{align} 
\Gamma(0) & \text{ is a closed surface diffeomorphic to } \mathbb{S}  ^2 \,,\\
\Omega^+(0) & \text{ is an open set diffeomorphic to the unit ball in } \mathbb{R}^3  \,,\\
\Omega^-(0) & = \Omega - \overline{\Omega^+(0)}  \,,\\
u_0(x) & = {u_0 }_+(x) {\bf 1}_{ \overline{\Omega ^+(0)}} + {u_0}_- (x) {\bf 1}_{ \Omega ^-(0)}\,, \\
\operatorname{div} {u_0} &= 0  \,,\\
\omega_0 &= \operatorname{curl} u_0 \,, \\
\omega_0(x) & = \left\{
\begin{array}{ll}
\operatorname{curl} u_0^+(x)\,, & x \in \overline{\Omega^+(0)} \\
0\,, & x \in \Omega^-(0) 
\end{array}
\right. \,, \\
\operatorname{curl} u_0^+ \cdot n(\cdot , 0) & =0 \text{ on } \Gamma(0) \,, \\
\operatorname{curl} u_0^+ \times  n(\cdot , 0) & \neq 0 \text{ on } \Gamma(0) \,, \\
\int_\Omega u_0(x) dx &=0 \,.
\end{align} 
\end{subequations} 
 We then
call $\Omega^+(0)$ the initial {\it vortex patch} and $\Gamma(0)$ the initial vortex patch boundary.   The identity (\ref{lag_vor}) shows that
for each $t \in [0,T]$, $\omega( \cdot  ,t) = 0$ in $\Omega^-(t)$ and that $\omega( \cdot , t)\cdot n(\cdot , t) =0$ on $\Gamma(t)$.   In particular, 
if the initial vorticity is supported in a set which is diffeomorphic to a ball, then the vorticity stays supported in a  set diffeomorphic to a ball for
 all time $t \in [0,T]$ for which the solution exists.   In (\ref{vp3d}), we could instead set 
 $\Omega^-(0)  = \mathbb{R}   ^3 - \overline{\Omega^+(0)}$.

\subsection{Statement of the main result}
Because of the singular
nature of $\nabla K$, it is difficult to establish regularity for higher-order derivatives of $u$ with the formula (\ref{problematic})  .   
By taking a 
different approach, however, we shall prove that the velocity field indeed enjoys higher-order Sobolev regularity on both sides of the vortex
patch boundary.   In particular, for the 2-D vortex
patch problem defined in Section \ref{2dpatch}, we have the following
\begin{theorem}[Regularity of velocity field in 2-D]\label{thm1} Given initial data (\ref{vp2d}) and a global-in-time solution to the 2-D vortex patch problem satisfying
$$
\frac{1}{|z|_*(t)}+ \| z( \cdot , t)\|_{H^{k-0.5}( \mathbb{S}  ^1)} + \| \nabla u ( \cdot , t) \|_{ L^ \infty ( \mathbb{R}^2  )}   \le F(t)
$$
for  $t \in [0, \infty )$ and $k \ge 4$, the  velocity field satisfies $u^+( \cdot , t)  \in H^k(\Omega^+(t))$ and 
$u^-( \cdot , t)  \in H^k_{\loc}(\Omega^-(t))$, and
$$
\| u^+( \cdot , t) \|_{H^k(\Omega^+(t))} + \| u^-( \cdot , t) \|_{H^k(\Omega^-(t)) \cup B(0,R(t) ) )} \le G(t) \,,
$$
where $B(0,R(t))$ is a ball centered at $0$ with radius $R(t)>0$ such that $\Gamma(t) \subset B(0,R(t))$, and  $G(t)> 0$ is a function of $F(t)$,
defined in (\ref{bounds}),  with
$G(t) < \infty $ for any $t < \infty $.
\end{theorem}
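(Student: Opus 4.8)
The plan is to bootstrap the regularity of the \emph{already existing} solution $u$, whose baseline bound (\ref{bounds}) already gives $u(\cdot,t)\in W^{1,\infty}(\mathbb R^2)$, up to $H^k$ on each side, with all constants controlled by $F(t)$. The first step is to discard the singular formula (\ref{problematic}) entirely and instead exploit that on each side of $\Gamma(t)$ the velocity solves a div--curl system with \emph{smooth} data: in $\Omega^+(t)$ one has $\curl u^+=\omega=1$ and $\div u^+=0$, while in $\Omega^-(t)$ one has $\curl u^-=0$ and $\div u^-=0$. Equivalently, writing $u=\nabla^\perp\psi$ with $\Delta\psi=\mathbf 1_{\Omega^+(t)}$, it suffices to prove $\psi^\pm\in H^{k+1}$ on each side, since then $u^\pm=\nabla^\perp\psi^\pm\in H^k$. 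The transmission structure is supplied by the classical fact, implicit in (\ref{bounds}), that $u$ is continuous across $\Gamma(t)$, so that $u^+=u^-$ there; this furnishes boundary data for each one-sided problem. Note that the trace of an $H^k$ field is $H^{k-0.5}$, exactly the regularity of $\Gamma(t)$: the field and the free boundary sit at the same Sobolev level, so off-the-shelf elliptic regularity with a rough boundary cannot be invoked directly.

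Next I would flatten $\Gamma(t)$. Using the $H^{k-0.5}$ parameterization $z(\cdot,t)$, I build a diffeomorphism $\Phi(\cdot,t)$ from a fixed reference domain --- the unit disk for the interior problem and its complement for the exterior --- onto $\overline{\Omega^+(t)}$, taking $\Phi$ to be the harmonic extension of its boundary values $z$, so that $\Phi(\cdot,t)\in H^k$ and $\nabla\Phi\in H^{k-1}$. The chord-arc lower bound $1/|z|_*(t)\le F(t)$ from (\ref{chordarc})--(\ref{bounds}) guarantees that $\Phi$ is a bi-Lipschitz diffeomorphism with Jacobian bounded away from zero, uniformly in terms of $F(t)$. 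Pulling $\Delta$ back through $\Phi$ produces a divergence-form, uniformly elliptic operator $\mathcal L$ whose coefficients are algebraic functions of $\nabla\Phi$ and $(\det\nabla\Phi)^{-1}$, hence lie in $H^{k-1}$ with norms controlled by $F(t)$; here I use that, since $k\ge 4$, $H^{k-1}(\mathbb R^2)$ is a multiplicative algebra stable under composition with smooth functions. The transformed stream function $\tilde\psi=\psi\circ\Phi$ then solves $\mathcal L\tilde\psi=\tilde f$ on the reference domain with $\tilde f\in H^{k-1}$.

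The heart of the argument is a \emph{matched-regularity} elliptic estimate proving $\tilde\psi\in H^{k+1}$ even though $\mathcal L$ has only $H^{k-1}$ coefficients. I would carry this out by separating tangential and normal directions. First, apply up to $k-1$ tangential derivatives $\partial_\theta$ to the equation and run an $L^2$ energy estimate; the dangerous top-order commutator $[\partial_\theta^{\,k-1},\mathcal L]\tilde\psi$, in which all derivatives land on the coefficients, is handled by an integration by parts that moves one derivative back onto $\tilde\psi$, after which every factor is estimated by the $H^{k-1}$ algebra property and the bounds on $\nabla\Phi$. This controls all derivatives of $\tilde\psi$ carrying at most one normal differentiation. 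The missing pure-normal derivatives are then recovered directly from the equation: uniform ellipticity lets one solve $\mathcal L\tilde\psi=\tilde f$ algebraically for the double-normal derivative $\partial_n^2\tilde\psi$ in terms of $\tilde f$, tangential derivatives, and lower-order terms, and iterating this identity upgrades the tangential estimate to a full $H^{k+1}$ bound. Unwinding $\Phi$ yields $\psi^+\in H^{k+1}(\Omega^+(t))$, hence $u^+\in H^k(\Omega^+(t))$, with norm $\le G(F(t))$.

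I expect this matched-regularity step to be the main obstacle: because the boundary and the field carry the same number of derivatives, the naive derivative count fails by exactly one in the top-order commutator, and only the divergence structure together with the margin afforded by $k\ge 4$ lets the integration by parts close the estimate. The remaining point is the exterior field $u^-$. There the same scheme applies on the complementary reference domain, with two adjustments: the far-field behavior must be accounted for, and this is where the localization to $B(0,R(t))$ enters --- away from $\Gamma(t)$ the velocity is given by the smooth Biot--Savart integral (\ref{BS}) and behaves like the field of a point vortex of circulation $|\Omega^+(t)|$, decaying like $|x|^{-1}$, so that $u^-\in H^k_{\loc}(\Omega^-(t))$ with the stated norm on $\Omega^-(t)\cap B(0,R(t))$. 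Combining the interior and exterior estimates and collecting the $F(t)$-dependence into a single function $G$ completes the proof.
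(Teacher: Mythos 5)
Your strategy of proving $\psi^\pm\in H^{k+1}$ on each side and then setting $u^\pm=\nabla^\perp\psi^\pm$ cannot be closed, and this is the central gap. The flattening map $\Phi$ built from $z(\cdot,t)\in H^{k-0.5}(\mathbb S^1)$ is only of class $H^k$, so the pulled-back coefficients of $\mathcal L$ lie in $H^{k-1}$ and, more fundamentally, the composition $\tilde\psi=\psi\circ\Phi$ can never exceed $H^k$ regularity no matter how smooth $\psi$ is on each side; an estimate for $\tilde\psi$ in $H^{k+1}$ is therefore structurally unreachable through this chart. The same mismatch appears analytically in your ``matched-regularity'' step: the top-order commutator at level $H^{k+1}$ contains a term of the schematic form $\int \partial^{k}b\,\nabla\tilde\psi\;\partial^{k}\nabla\tilde\psi\,dx$ with $b\in H^{k-1}$ only, and the integration by parts you propose produces $\int\partial^{k-1}b\,\nabla\tilde\psi\;\partial^{k+1}\nabla\tilde\psi\,dx$, whose last factor is at the $H^{k+2}$ level and is not controlled by the energy. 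The tangential energy method with $H^{k-1}$ coefficients yields at most $\tilde\psi\in H^{k}$, hence $u\in H^{k-1}$ --- one derivative short. The paper's resolution is to run the stream-function argument only far enough to get $u\in H^{2}$ (so that traces of $\nabla u$ on $\Gamma(t)$ make sense), and then to switch to the two-phase elliptic problem satisfied by $u$ itself: $\Delta u^\pm=0$ in $\Omega^\pm(t)$ with $\jump{u}=0$ and $\jump{\nabla_n u}=-\tau$ on $\Gamma(t)$, the latter derived from $\jump{\operatorname{curl}u}=1$, $\jump{\operatorname{div}u}=0$ and continuity of $u$. For that problem the unknown needs only $H^k$, the jump datum $\tau\in H^{k-1.5}(\Gamma(t))$ and the $H^{k-1}$ coefficients sit at exactly the right levels, and the bootstrap closes. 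Your proposal never derives this jump condition nor formulates the problem for $u$ directly, and without it the derivative count fails by one.

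A secondary but genuine error: you take $\Phi$ to be the harmonic extension of $z$ and assert that the chord-arc bound makes it a bi-Lipschitz diffeomorphism with Jacobian bounded below. The harmonic extension of a Jordan-curve parameterization is a diffeomorphism of the disk only under additional hypotheses (e.g.\ convexity of $\Omega^+(t)$, as in Rad\'o--Kneser--Choquet); for a general patch you have no control of $\partial_r\Phi$, hence of $\det\nabla\Phi$, at the boundary. The paper instead uses the biharmonic extension (\ref{tz-equation}) with the prescribed Neumann datum $\partial_r Z^+=\partial_\theta z^\perp$, which forces $\det\nabla Z^+(1,\theta,t)=|\partial_\theta z(\theta,t)|^2\ge\alpha(t)>0$ and gives local injectivity near $\mathbb S^1$ (all that is needed, since only local charts near $\Gamma(t)$ are used). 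This part of your argument is repairable, but as written the diffeomorphism claim is unjustified.
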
 

\begin{remark}
Notice that both velocity vector fields $u^+$ and $u^-$ gain a half-derivative of regularity with respect to the regularity of the vortex patch
boundary $\Gamma(t)$.   This is very natural in Sobolev spaces $H^k$, but requires us to locally extend our 1-D parameterization $z( \cdot , t)$
to  a 2-D local diffeomorphism $\theta^+( \cdot , t)$ and $\theta^-(\cdot , t)$ which also gains a half-derivative of regularity.    This is accomplished
by a specially chosen elliptic extension which we describe in Section \ref{sec::flatten}.   On the other hand, if we had assumed instead that the
parameterization $z( \cdot  , t) \in H^k( \mathbb{S}  ^1)$, then a standard local ``graph'' extension would have sufficed.  More 
specifically, if $z( \cdot , t)$ is given locally by the graph $(x_1, h(x_1))$, then $(x_1, x_2+ h(x_1))$ provides a local extension to a diffeomorphism, 
but does not gain a half-derivative of regularity.
\end{remark}

\begin{remark} Without any change to our proof, the  initial data (\ref{vp2d}) can be replaced by the more general initial data 
$$
\omega _0(x) =\left\{
\begin{array}{ll}
\omega_0^+ (x) \,, & x \in \overline{\Omega^+(0)} \\
\omega_0^-(x)\,, & x \in \Omega^-(0)
\end{array}
\right.
$$
for any functions $\omega^+_0 \in H^{k-1}(\Omega^+_0)$ and $\omega^-_0 \in H_{\loc}^{k-1}(\Omega^-_0)$, $k \ge 4$.  
\end{remark}

\begin{remark} In fact, Theorem \ref{thm1} is true for $k\ge 3$, but the proof requires one less regularization step for $k\ge 4$.
\end{remark}

%

Whereas 
Chemin \cite{Chemin1995} and Bertozzi \& Constantin \cite{BeCo1993} have established regularity of the contour $\Gamma(t)$ for the 2-D
vortex patch problem, the regularity of the 3-D vortex patch boundary $\Gamma(t)$ is limited to $\mC^{1, \alpha }$ in the analysis
of  Gamblin \& Saint Raymond \cite{GaSa1995}.    As our final result, we simultaneously establish  an existence theory in Sobolev spaces for
the 3-D vortex patch problem, as well as  the
Sobolev-class regularity of the 2-D closed surface  $\Gamma(t)$ and the velocity fields $u^+$ and $u^-$.

\begin{theorem}[Existence and regularity for the 3-D vortex patch boundary and velocity fields]\label{thm3} For $k \ge 3$, if $\Gamma(0)$ is a closed surface of Sobolev class $H^{k-0.5}$, and $u_0 \in H^1(\Omega)$ with $u_0^+ \in H^k( \Omega^+(0))$, $u_0^- \in H^k(\Omega^-(0))$ and
satisfying (\ref{vp3d}), then there is a time $T>0$ such that there exists a unique solution to the 3-D vortex patch problem, and for each
$t \in [0, T]$, the vortex patch boundary $\Gamma(t)$ is in $H^{k-0.5}$, 
$u^+( \cdot , t) \in H^k( \Omega^+(t))$,  and $u^-( \cdot , t) \in H^k( \Omega^-(t))$.
\end{theorem}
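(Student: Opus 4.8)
The plan is to reformulate the problem as a self-contained evolution for the Lagrangian flow $\eta(\cdot,t)$, to close a priori estimates at the balanced $H^{k}$ velocity / $H^{k-0.5}$ boundary level, and then to obtain existence by approximation and uniqueness by a contraction estimate. First I would use the two structural identities already recorded: the volume-preservation $\det\nabla\eta(\cdot,t)=1$ and the vorticity transport (\ref{lag_vor}), $\omega(\eta(x,t),t)=\nabla\eta(x,t)\cdot\omega_0(x)$. Substituting the latter into the Biot--Savart law (\ref{BS3d}) and changing variables by $y\mapsto\eta(y,t)$ (permissible because $\det\nabla\eta=1$) yields a closed integro-differential equation
\begin{equation*}
\partial_t\eta(x,t)=\int_\Omega \K\big(\eta(x,t)-\eta(y,t)\big)\,\nabla\eta(y,t)\cdot\omega_0(y)\,dy,\qquad \eta(x,0)=x,
\end{equation*}
whose unknown is $\eta$ alone; the moving domains are recovered as $\Omega^\pm(t)=\eta(\Omega^\pm(0),t)$ and the boundary as $\Gamma(t)=\eta(\Gamma(0),t)$. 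I would note at the outset that the transmission structure is automatically propagated: since $\eta(\cdot,t)$ carries $\Gamma(0)$ diffeomorphically onto $\Gamma(t)$, the matrix $\nabla\eta$ sends tangent vectors to tangent vectors, so (\ref{lag_vor}) preserves both $\omega\cdot n=0$ on $\Gamma(t)$ and the vanishing of $\omega$ in $\Omega^-(t)$, exactly as recorded after (\ref{vp3d}). Hence $u$ stays continuous across $\Gamma(t)$ while $\curl u$ retains its tangential jump.

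The core of the argument is an elliptic gain-of-regularity estimate for the velocity on the moving domain, playing the same role in 3-D as Theorem \ref{thm1} does in 2-D. On each side $\Omega^\pm(t)$ the velocity solves the div--curl system $\curl u=\omega$, $\div u=0$, with $\omega(\cdot,t)\in H^{k-1}$ on each side (inherited from $u_0^\pm\in H^k$ through (\ref{lag_vor}), using that $H^{k-1}$ is an algebra for $k\ge 3$). Flattening $\Gamma(t)$ by the half-derivative-gaining elliptic extension constructed in Section \ref{sec::flatten}, I would apply Agmon--Douglis--Nirenberg elliptic regularity to this transmission problem, with continuity of $u$ and the tangency $\omega\cdot n=0$ furnishing the compatible boundary conditions, to conclude
\begin{equation*}
\|u^+(\cdot,t)\|_{H^k(\Omega^+(t))}+\|u^-(\cdot,t)\|_{H^k(\Omega^-(t))}\le C\big(\|\Gamma(t)\|_{H^{k-0.5}},\ \text{geometry}\big)\,\big(\|\omega(\cdot,t)\|_{H^{k-1}}+1\big).
\end{equation*}
The crucial feature, as in the remark following Theorem \ref{thm1}, is that an $H^{k-0.5}$ surface produces a full $H^k$ velocity; this is what makes the scale self-consistent, since the trace of $u^\pm\in H^k$ on $\Gamma(t)$ lies in $H^{k-0.5}$, i.e.\ in exactly the regularity class in which the boundary is transported.

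With this estimate in hand I would close the a priori bounds on a short interval $[0,T]$. Differentiating $\Gamma(t)=\eta(\Gamma(0),t)$ shows the boundary is transported by the trace of $u$, so $\|\Gamma(t)\|_{H^{k-0.5}}$ obeys a transport estimate driven by $\|u^\pm(\cdot,t)\|_{H^k}$; the vorticity bound $\|\omega(\cdot,t)\|_{H^{k-1}}$ obeys a transport-plus-stretching estimate driven by $\|\nabla\eta(\cdot,t)\|_{H^{k-1}}$ via (\ref{lag_vor}); and the velocity bound feeds back through the elliptic estimate above. Coupling these yields a closed Gronwall-type system of differential inequalities for the triple $\big(\|\Gamma(t)\|_{H^{k-0.5}},\,\|\omega(\cdot,t)\|_{H^{k-1}},\,\|\nabla\eta(\cdot,t)\|_{H^{k-1}}\big)$, together with a lower bound on the chord-arc quantity controlling non-self-intersection (the analog of $|z|_*$ in (\ref{bounds})), which produces a finite $T>0$ on which all norms remain bounded. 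Existence I would then obtain by constructing smooth approximate solutions (mollifying $\omega_0$ and the kernel $\K$, or iterating the integro-differential equation above), deriving the same bounds uniformly in the regularization parameter, and passing to the limit by compactness; the contour regularity $\Gamma(t)\in H^{k-0.5}$ follows as in the $n$-dimensional argument of Section \ref{sec5}, and uniqueness from a Gronwall estimate on the difference of two solutions in a lower-order norm. The hard part will be the second step: controlling the dependence of the elliptic constant on the moving geometry while genuinely gaining the half-derivative, so that the $H^{k-0.5}$ boundary and $H^k$ velocity norms stay in the balanced scale throughout, rather than degrading by half a derivative at each pass through the fixed-point loop.
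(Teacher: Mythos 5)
Your proposal takes a genuinely different route from the paper. You propose the classical contour-dynamics strategy: close the system as an integro-differential equation for $\eta$ via Biot--Savart and (\ref{lag_vor}), prove a two-phase div--curl (equivalently, transmission) elliptic estimate giving $u^\pm\in H^k$ from $\Gamma(t)\in H^{k-0.5}$ and $\omega\in H^{k-1}$, close a Gronwall system of a priori estimates, and then obtain existence by regularization and compactness. The paper instead never writes the Biot--Savart integral: it formulates the velocity as the solution of a two-phase elliptic \emph{variational} problem (\ref{weaku3dlag}) pulled back to the fixed domains $\Omega^\pm(0)$, with Sobolev-class coefficients $\mathcal{A}=\det\nabla\eta\,AA^T$, and runs a Schauder fixed-point argument on the map $v\mapsto\bar v$ defined by the linear elliptic solve (\ref{fp9}) with data $\mathcal{C}(v)=\nabla\eta\cdot\omega_0$; the gain of half a derivative is delivered once and for all by Theorem \ref{thm:vector-valued_elliptic_eq_Sobolev_coeff}, and a separate (nontrivial) verification shows the fixed point satisfies $\operatorname{div}u=0$ and $\operatorname{curl}u=\mathfrak{C}$, hence solves Euler. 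Your approach buys a more familiar evolutionary structure and makes the propagation of the tangency conditions transparent; the paper's approach buys a clean existence mechanism in which all regularization is confined to the coefficients inside the elliptic regularity proof, and in which the dependence of the elliptic constants on the moving geometry is automatic because the problem lives on the fixed domain.

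There is, however, a concrete gap in your existence step. The uniform estimates you need are \emph{piecewise} $H^k$ estimates across a moving surface of discontinuity of $\operatorname{curl}u$, and both regularizations you offer destroy exactly the structure those estimates rely on. Mollifying $\omega_0$ smears the jump $\jump{\omega_0\times n}\neq 0$ across $\Gamma(0)$, so the approximate problems have no two-phase transmission structure and no distinguished surface on which to measure $H^{k-0.5}$ regularity; it is then unclear how the limit recovers one-sided $H^k$ velocities rather than a globally $H^{1}$ velocity with some intermediate regularity. Mollifying the kernel $\K$ keeps the patch but breaks the identity $u=\K * \omega \Leftrightarrow \{\operatorname{curl}u=\omega,\ \operatorname{div}u=0\}$, so the mollified velocity no longer solves the two-phase elliptic problem and your gain-of-half-derivative estimate does not apply to it uniformly in the mollification parameter. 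To make your scheme work you would need an approximation that preserves the transmission structure (e.g.\ smoothing $\Gamma(0)$ and the one-sided data $u_0^\pm$ separately, or an iteration whose every iterate solves an exact two-phase elliptic problem), together with a quantitative statement of how the elliptic constant depends on $\|\Gamma(t)\|_{H^{k-0.5}}$ and the chord-arc bound. The paper's fixed-point formulation is precisely such an iteration, and Theorem \ref{thm:vector-valued_elliptic_eq_Sobolev_coeff} is precisely that quantitative statement; without an analogue of both, your "pass to the limit by compactness" step is not yet justified.
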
 

\begin{remark}
The more general initial data (\ref{vp3d-general}) can replace (\ref{vp3d}) in Theorem \ref{thm3}.
\end{remark} 

\noindent
{\bf Notation.} We will denote the partial derivative $\frac{\partial f}{\partial x_j}$ by $f,_j$ for $j=1$, $2$, or $3$.   We will use the Einstein summation
convention, wherein repeated indices are summed from $1$ to $\n$, with $\n$ equaling either $2$ or $3$.

\subsection{Outline of the paper}
In Section \ref{sec2}, we define the strong form of the two-phase elliptic problem that the two-dimensional stream function must satisfy, and we also
define the associated variational formulation.   In Section \ref{sec::flatten}, we define the local diffeormorphisms  that we use to locally flatten the
vortex patch boundary; these diffeomorphisms gain one-half derivative of interior regularity in $H^k$ spaces relative to the regularity of the vortex
patch boundary.   Section \ref{sec4} is devoted to the Sobolev regularity theory of the fluid velocities $u^+( \cdot ,t)$ and $u^-( \cdot ,t)$ in the
2-D vortex patch problem.  

The 3-D vortex patch problem is studied in Section
\ref{sec5}.   After defining the two-phase-elliptic problem for the fluid velocity, we simultaneously
prove existence of solutions and establish the regularity theory for both the vortex patch boundary  $\Gamma(t)$ and the velocity fields $u^+( \cdot, t)$
and $u^-( \cdot , t)$; this is done in the Lagrangian framework. 
  Finally, in Section \ref{sec::elliptic_regularity}, we establish the fundamental  regularity estimates for the two-phase elliptic problem with Sobolev-class
coefficients in $\n$-dimensions (which arises in many applications, including the vortex patch problem).
   For completeness, we include a short appendix with some basic inequalities that are used in Section  \ref{sec::elliptic_regularity}.

\section{A two-phase elliptic problem for the 2-D stream function}\label{sec2}

The 2-D vortex patch problem has been previously studied using the evolution equation for the parameterization of the contour $z( \cdot ,t)$ 
(\cite{Chemin1993, BeCo1993}); see also \cite{CoTi1988} for perturbations of circular patches and \cite{Co2015} for elliptical patches).  We will take
a different approach.
 
 While not necessary, it is convenient to introduce the stream function formulation of the problem. 
  Let $\psi (x,t) = \psi^+ (x,t) {\bf 1}_{ \overline{\Omega ^+(t)} } + \psi ^- (x,t) {\bf 1}_{ \Omega ^-(t)}$.   
  We set $\jump{F} = F^+ - F^-$ on $\Gamma(t)$, and let $n( \cdot ,t)$ denote the outward unit normal to
 $\Gamma(t)$, and $\tau( \cdot , t)$ denote the unit tangent vector to $\Gamma(t)$.

 For each time $t \in [0, \infty )$,  the bounds (\ref{bounds}) show that
 $\nabla u( \cdot , t) \in L^{ \infty }( \mathbb{R}^2  )$; thus, $u( \cdot , t)  \in \hol$ and so the
 stream function $\psi( \cdot , t)  \in \htl$ is a  solution to the following
 two-phase elliptic problem for each fixed $t \in [0, \infty )$:
 \begin{subequations}\label{strong}
\begin{alignat}{2}
-\Delta \psi^+ ( \cdot ,t)&= -1 \qquad&&\text{in}\quad\Omega(t)^+\,, \\
\Delta \psi^- ( \cdot ,t)&= 0 \qquad&&\text{in}\quad\Omega(t)^-\,, \\
\jump{\psi (\cdot , t)} &= 0 \qquad&&\text{on}\quad\Gamma(t)\,,\\
\Bigjump{\frac{\partial \psi}{\partial n} ( \cdot ,t)} &= 0 \qquad&&\text{on}\quad\Gamma(t)\,.
\end{alignat}
\end{subequations}
The fact that $\psi( \cdot , t)  \in \htl$ means that the interface jump condition (\ref{strong}d) holds in $H^{0.5}(\Gamma(t))$.

  For each time $t \in [0, \infty )$,  (\ref{strong}) has the following weak formulation:
 \begin{equation}\label{weak}
 \int_{\Omega^+(t)} \nabla \psi ^+ ( \cdot, t) \cdot  \nabla \phi \, dx +  \int_{\Omega^-(t)} \nabla \psi ^-(\cdot , t)\cdot  \nabla \phi \, dx =
 - \int_{\Omega^+(t)} \phi\, dx \ \ 
 \forall \phi \in H^1 ( \mathbb{R}  ^2 ) \,.
 \end{equation} 
 From the bounds (\ref{bounds}), the stream-function satisfies
\begin{equation}\label{bounds2}
 \| \psi( \cdot , t) \|_{H^2(B(0,R(t))} \le F(t) \,,
\end{equation} 
where $B(0,R(t))$ is a ball centered at $0$ with radius $R(t)>0$ such that $\Gamma(t) \subset B(0,R(t))$.

%

\section{Locally flattening the boundary $\Gamma(t)$}\label{sec::flatten}
 We construct  local diffeomorphisms in small neighborhoods of $\Gamma(t)$ which  locally ``flatten'' the
vortex patch boundary, and which gain one-half derivative of regularity in the interior with respect to the regularity of the parameterization $z( \cdot ,t)$. There are other methods to construct regularizing diffeomorphisms (see, for example, \cite{La2005, CoSh2007, ShZe2008}), but the method we
present appears quite natural for arbitrary geometries.

Let $D^+=\{x \in \mathbb{R}^2  \ : \ |x| < 1\}$ denote the open unit ball in $ \mathbb{R}^2  $ with boundary $ \mathbb{S}  ^1 
=\{x \in \mathbb{R}^2  \ : \ |x| = 1\}$, the unit circle.   For each $t \in [0, \infty )$, we solve the following elliptic equation for 
$Z(r, \theta ,  t)$:
\begin{subequations}
\label{tz-equation}
\begin{alignat}{2} 
\Delta ^2 Z^+ & = 0 && \ \ \text{ in } D^+ \,, \\
Z^+& = z && \ \ \text{ on } \mathbb{S}  ^1 \,, \\
\frac{\partial Z^+}{ \partial r} & =  \frac{\partial z^\perp}{\partial \theta} && \ \ \text{ on } \mathbb{S}  ^1 \,.
\end{alignat}
\end{subequations}
The unique solution $Z^+(r, \theta, t)$ to (\ref{tz-equation}) satisfies the estimate
\begin{equation}\label{Z-estimate}
\| Z ^+( \cdot ,\cdot , t) \|_{H^k(D^+)} \le C \| z (\cdot , t) \|_{H^{k-0.5}( \mathbb{S}  ^1)} \,,
\end{equation} 
and we are considering integers $k \ge 4$.
The boundary conditions  (\ref{tz-equation}b,c) show that 
$$\det \nabla Z^+ (1, \theta,t) = |\partial_\theta z(\theta,t)|^2\,.$$
From the
definition (\ref{chordarc}) of $|z|_*(t)$  and its lower-bound given by (\ref{bounds}), it is proven in \cite{MaBe2002} that there exists a function 
$ \alpha (t) >0$ such that $ \alpha (t) \le \min_{\theta \in \mathbb{S}  ^1}  |\partial_\theta z(\theta,t)|^2$.   Hence,
$\det \nabla Z^+ (1, \theta,t) \ge \alpha (t) >0$.
This shows that $Z^+$ is locally injective around each point on $\mathbb{S}^1$.

Next, we define
$D^- = \{x \in \mathbb{R}^2  \ : \ 1 <  |x| < R(t) \}$, where $R(t)>0$ is  chosen sufficiently large so that the ball $B(0,R(t))$ contains 
$\Gamma(t)$.  
We let $Z^-(r, \theta , t)$ solve
\begin{subequations}
\label{tz-equation2}
\begin{alignat}{2} 
\Delta ^2 Z^- & = 0 && \ \ \text{ in } D^- \,, \\
Z^-& = z && \ \ \text{ on } \mathbb{S}  ^1 \,, \\
Z^-& = \id && \ \ \text{ on } \{r = R(t)\} \,, \\
\frac{\partial Z^-}{ \partial r} & =  \frac{\partial z^\perp}{\partial \theta} && \ \ \text{ on } \mathbb{S}  ^1 \,, \\
\frac{\partial Z^-}{ \partial r} & = e_r && \ \ \text{ on }  \{r = R(t)\} \,,
\end{alignat}
\end{subequations}
where $e_r$ denotes the unit basis vector $(\cos \theta, \sin \theta)$.
Again, we see that
the unique solution $Z^-(r, \theta, t)$ to (\ref{tz-equation2}) satisfies the estimate
\begin{equation}\label{Z-estimate2}
\| Z ^-( \cdot ,\cdot , t) \|_{H^k(D^-)} \le C \| z (\cdot , t) \|_{H^{k-0.5}( \mathbb{S}  ^1)} \,.
\end{equation} 
We define the map $Z = Z^+ {\bf 1}_{\overline{D^+} } + Z^- {\bf 1}_{D^-}$.  Due to the boundary conditions (\ref{tz-equation}b,c) and
(\ref{tz-equation2}b,d) and the Sobolev embedding theorem, the map $(r, \theta) \mapsto Z(r, \theta, t)$ is $\mC^1$, and for any 
point $\theta \in \mathbb{S}  ^1$, there
exists a ball $B(\theta, \varepsilon(t)) \subset \mathbb{R}^2  $, centered at $\theta$ with radius $ \varepsilon (t)>0$ taken sufficiently small,
such that $Z( \cdot , \cdot , t) $ is injective on $B(\theta, \varepsilon(t))$.

Next, we show that  for $ \epsilon >0$ sufficiently small,  the image  $Z^+( 1- \epsilon ,\theta , t)$ is contained in $\Omega^+(t)$, 
and similarly,
that  the image $Z^-( r,\theta , t)$ is contained in $\Omega^-(t)$.
To that end, let $\theta_0(t)$ denote the point in $ [0, 2\pi] $ at which the maximum value of $z( \theta , t) \cdot e_2$ occurs.
We assume that the tangent vector $\partial_\theta z( \theta_0(t), t) = \beta(t) \, e_1$ for some $ \beta(t) >0$ (for, 
 otherwise,  we can reverse the orientation of the parameterization).
Hence,  $\partial_\theta z^\perp ( \theta_0(t), t)=  \beta(t) \, e_2$. This shows that $\frac{\partial Z^+_2}{\partial x_2}(1,\theta_0(t),t)>0$, 
which in turn implies that $Z^+_2(1-\epsilon,\theta_0(t),t)<Z^+_2(1,\theta_0(t),t)$ which proves that, for $ \epsilon >0$ sufficiently small,
for all $r \in [1- \epsilon ,1)$ and $\theta \in [ \theta_0(t) - \epsilon , \theta_0(t) + \epsilon ]$,
$$Z^+(r, \theta ,t) \cdot e_2 <  z(\theta_0(t),t)\cdot e_2\,. $$
Therefore, $Z^+$ maps a local neighborhood of $\theta_0(t)$ (in $D^+$)  into 
$\Omega^+(t)$. 
Since $Z^+$ is locally injective around $\mathbb{S}_1$, this means that the image of any $Z^+(1-\epsilon,\cdot,t)$ 
(for $\epsilon>0$ small enough) stays in $\Omega^+(t)$, otherwise it would intersect $\Gamma(t)$, which we shall next  prove that
this  cannot occur.   Similarly, 
the image of any $Z^-(1+\epsilon,\cdot,t)$ stays in $\Omega^-(t)$.

We next prove that for $\epsilon>0$ sufficiently small, 
$$ Z^+(1-\epsilon,\theta,t) \cap \Gamma(t) = \emptyset \ \ \forall \theta \in \mathbb{S}  ^1 \,.$$
Since $\det \nabla Z^+ (1, \theta,t) \ge \alpha (t) >0$ for all $\theta \in \mathbb{S}  ^1$, 
by the inverse function theorem, there exists a small ball $B(\theta, \R(\theta)) \subset \mathbb{R}^2  $, centered at 
$\theta \in \mathbb{S}  ^1$ with radius $\R(\theta)>0$, such that 
$Z^+( \cdot , \cdot , t)$ is a $\mathcal{C}^1$-diffeomorphism between $ D^+ \cap B(\theta, \R(\theta))$ and 
$ Z^+( D^+ \cap B(\theta, \R(\theta)), t)$, as well as a 
 homeomorphism between $\overline{D^+\cap B(\theta ,\R(\theta))}$ and $Z^+(\overline{D^+\cap B(\theta ,\R(\theta))},t)$. 
 Since the compact set $\mathbb{S}^1$ is covered by $\bigcup_{ \theta \in \mathbb{S}  ^1} B(\theta ,\R(\theta))$, 
 we can extract a finite subcover  $ \bigcup_{i=1}^N B(\theta_i,\R(\theta_i))$, where $\theta_i$, $i=1,...,N$ are points in $ \mathbb{S}  ^1$.

Let $ \mathfrak{A}  ^ \epsilon = \{  x\in \mathbb{R}^2  \ : \ 1- \epsilon \le |x| < 1\}$ denote an annulus.  We choose $ \epsilon >0$  small enough so that $ \mathfrak{A}  ^ \epsilon \subset \bigcup_{i=1}^N B(\theta_i,\R(\theta_i))$.  With $(r, \theta) \in \mathcal{A} ^ \epsilon $ fixed, we
choose $i \in \{1,...,N\}$ 
such that $(r,\theta)\in B(\theta_i,\R(\theta_i))\cap D^+$. 
Since $Z^+( \cdot , \cdot , t) $ is $\mathcal{C}^1$ diffeomorphism between 
$D^+\cap B(\theta_i,\R(\theta_i))$ and $Z^+(D^+\cap B(\theta_i,\R(\theta_i)),t)$, then
 $Z^+(r,\theta,t)\in Z^+(D^+\cap B(\theta_i,\R(\theta_i)),t)$.   Furthermore, 
 as $Z^+$ is an homeomorphism between $\overline{D^+\cap B(\theta_i,\R(\theta_i))}$ and 
 $Z^+(\overline{D^+\cap B(\theta_i,\R(\theta_i))},t)$, then
  $Z^+(r,\theta,t) \not\in Z^+(\partial[{D^+\cap B(\theta_i,\R(\theta_i))}],t)$, which implies that $Z^+(r,\theta,t) \not\in 
  z([\theta_i-\R(\theta_i),\theta_i+\R(\theta_i)],t)\subset\Gamma(t)$. 

In summary, we have shown that for $(r,\theta)\in B(\theta_i,\R(\theta_i))\cap D^+$,
$$Z^+(r,\theta,t) \text{ is in the interior of }
Z^+(\overline{D^+\cap B(\theta_i,\R(\theta_i))},t)$$
with
$$\text{diameter}\left( Z^+(\overline{D^+\cap B(\theta_i,\R(\theta_i))},t) \right)  \le  2 \|\nabla Z^+\|_{L^\infty(D^+)} \R(\theta_i) \,.$$

From the positive lower bound  (\ref{bounds}) on the function  $|z|_*(t)$  in (\ref{chordarc}), there exists $\epsilon_0>0$ such that for any 
$x\in\Gamma(t)$, $B(x,\epsilon_0)\cap\Omega^+(t)$ does not contain any point of $\Gamma(t)$;  therefore, choosing the radius $\R(\theta)$
such that
$$2 \|\nabla Z^+\|_{L^\infty(D^+)} \R(\theta_i)< \epsilon_0\,,$$
(and increasing $N$ if necessary)
we have that $Z^+({D^+\cap B(\theta_i,\R(\theta_i))},t)$ does not contain any point of $\Gamma(t)$, which shows that 
$Z^+(r,\theta,t) \not\in \Gamma(t)$ as desired.   A similar argument shows that for $(r,\theta)\in B(\theta_i,\R(\theta_i))\cap D^-$,
$Z^-( r,\theta , t)$ is contained in $\Omega^-(t)$.


Thus, for each $\theta_i \in \mathbb{S}  ^1$, $i \in \{1,...N\}$, 
let $\U_i(t)= B(\theta_i ,  \R(\theta_i))\subset \mathbb{R}^2$, and let 
$\V_i(t) = Z(\U_i(t),t)$. 
The map $Z$ is then a $\mC^1$ diffeomorphism of $\U_i(t)$ onto  $\V_i(t)$, and 
due to the estimates (\ref{Z-estimate}) and (\ref{Z-estimate2}),
\begin{align*} 
Z^\pm( \cdot ,\cdot ,t) : D^\pm \cap \U_i(t) \to \Omega^\pm(t) \cap \V_i(t) \text{ is an $H^k$ diffeomorphism} \,.
\end{align*}

Next, we flatten the boundary of $\U_i(t) \cap \mathbb{S}  ^1$.  
For each $i \in \{1,...,N\}$, $\U_i(t) \cap \mathbb{S}  ^1$ is a graph given by $(x_1, h_i(x_1,t))$ where each $h_i( \cdot ,t)$ is
$\mC^ \infty $.   We define the $C^\infty$ local diffeomorphisms
$\vartheta_i^\pm(t)(x_1,x_2) = (x_1, x_2 \pm h_i(x_1,t))$ with $\det \nabla \vartheta_i^\pm(t) =1$, and we set 
$$B_\pm^i = [\vartheta_i^\pm(t)] ^{-1} ( \U_i(t) \cap D^\pm) \text{ and  }
B_0^i = [\vartheta_i^+(t)] ^{-1} ( \U_i(t) \cap \mathbb{S}  ^1) \,.$$
    The set $B_0^i \subset \{ x_2 =0\}$ is a flat boundary.


 Finally, we define
$\theta^\pm_i (t) = Z^\pm(t) \circ \vartheta^\pm_i(t)$.   Then
\begin{align} \label{charts}
\theta^\pm_i(t) : B^\pm \to \Omega^\pm \cap \V_i(t) \text{ is an $H^k$ diffeomorphism} \,,
\end{align} 
and thanks to (\ref{Z-estimate}), (\ref{Z-estimate2}), and (\ref{bounds}),  for each $i \in \{1,...,N\}$,
\begin{equation}\label{thetabound}
\Bigl\| \frac{1}{\det \nabla \theta^\pm_i (t)} \Bigr\|_{L^\infty  (B_\pm)} + \|\theta^\pm_i(t)\|_{H^k({B_\pm})}  \le \P( F(t)) \,,
\end{equation} 
where $\P(F(t))$ denotes a generic polynomial function of $F(t)$.
Furthermore,  if we set $\theta_i(t) = \theta_i^+(t){\bf 1}_{\overline{B_+^i}} + \theta_i^-(t){\bf 1}_{{B_-^i}}$, then
each $\theta_i(t) \in \mC^1(B)$, where $B = B_+ \cup B_-\cup B_0$.

\section{Regularity of the velocity field for 2-D vortex patches: 
Proof of Theorem \ref{thm1}}\label{sec4}
We first use the weak formulation (\ref{weak}) to build regularity of the stream function $ \psi^\pm $.  Interior regularity of $\Psi^\pm$
on sets
away from the patch boundary $\Gamma(t)$ is classical, so we focus our attention on regularity of $\Psi^\pm$ near $\Gamma(t)$.   We will
use the change-of-variables $\theta_i(t)$ given in (\ref{charts}).

\vspace{.1 in}
 \noindent
{\bf Step 1. The elliptic problem for $\Psi^\pm$ set on $B_\pm$.}
The weak formulation (\ref{weak}) can be written as
$$
 \int_{\V_i(t)\cap \Omega^+(t)} \nabla \psi ^+ ( \cdot, t) \cdot  \nabla \phi \, dx +  \int_{\V_i(t)\cap\Omega^-(t)} \nabla \psi ^-(\cdot , t)\cdot  \nabla \phi \, dx
  =
 - \int_{\V_i(t)\cap\Omega^+(t)} \phi\, dx 
 $$ 
 for all  test functions $\phi \in H^1_0 ( \V(t) )$ and each $i \in \{1,...,N\}$.
 
 With the collection of diffeomorphisms $\{\theta_i\}_{i=1}^N$ given in (\ref{charts}) for each $t\in [0, \infty )$, 
 we define 
$$A^\pm_i = [ \nabla \theta^\pm_i(t) ]^{-1}  \ \text{ and } \   J^\pm_i (t)= \det \nabla \theta_i^\pm(t)\,,$$
 and set 
 $$ \mathcal{A}^\pm _i =J^\pm_i [A^\pm_i ] [A^\pm_i ]^T\,.$$
It follows from  (\ref{thetabound}), and (\ref{bounds})  that for all $t \in [0, \infty) $, there exists 
a function $0 < \lambda_i(t)$ such that 
\begin{equation}\label{Abound}
w^T\, \mathcal{A}^\pm_i (x)\,  w  \ge \lambda_i(t) |w|^2 \ \forall w \in \mathbb{R}^2 \,, \ \ x \in B_\pm ^i \,.
\end{equation} 
To establish (\ref{Abound}), we drop the $i$ subscript (and superscript), and let  
$\tilde w_\pm =  J_\pm^{1/2} A_\pm\, w$. The left-hand side of (\ref{Abound}) is simply $ | \tilde w_\pm|^2$,  and  
$w= J_\pm^{-1/2} \nabla \theta^\pm
\tilde w_\pm$; therefore, 
$$\frac{|w|^2}{ \| J_\pm^{-1/2} \nabla \theta^\pm\|^2_{ L^ \infty (B^+) }} \le |\tilde w_\pm|^2 \,,$$
so that $\lambda (t) =  \| J^{-1/2}_\pm(t) \nabla \theta^\pm\|^{-2}_{ L^ \infty (B_\pm) }$, which has a strictly positive lower bound since 
$\lambda (t) ^{-1} =\| J_\pm^{-1/2} \nabla \theta^\pm\|^{2}_{ L^ \infty (B^+) }
\le \P(F(t))$ by (\ref{thetabound}).   Additionally, from (\ref{thetabound}), 
\begin{equation}\label{Abound2}
\| \mathcal{A}_\pm \|_{ H^{k-1}(B_\pm)} \le C \P(F(t)) \,.
\end{equation} 

   We set
 $$
 \Psi^\pm = \psi^\pm \circ \theta \,, \ \ \Phi = \phi\circ \theta \,.
 $$
Since $\phi \in H^1_0 ( \V(t) )$ and each $ \theta_i(t) \in \mC^1(B)$, it follows that $\Phi \in H^1_0(B)$, 
and can thus be used as a test function.
By another application of the change-of-variables formula, we  then have that
\begin{equation}\label{weak2}
 \int_{{B_+}} \mathcal{A}_+ ^{kj} \Psi ^+,_k  ( \cdot, t)\,  \Phi,_j \, dx + \int_{B_-} \mathcal{A}_- ^{kj} \Psi ^-,_k  ( \cdot, t)\,  \Phi,_j \, dx
  =
 - \int_{B_+} \Phi\, J_+ \,  dx  \ \ \forall  \Phi \in H^1_0 ( B ) \,.
 \end{equation} 
 
 \noindent
{\bf Step 2. $H^3$ regularity for $\psi^+$ and $\psi^-$.}  We set $k=4$ so  that $\theta^\pm \in H^4( B_\pm)$ and first establish that each 
$\psi^\pm$ is $H^3$.   We let $\{\zeta_i\}_{i=1}^N$ denote a smooth partition-of-unity, subordinate to the open cover $\U_i(t)$; in particular, 
$ 0 \le \zeta_i \le 1$ in $\mC^\infty_\cptspt(\U_i(t))$ denote a smooth cut-off function, $\sum_{i=1}^N \zeta_i =1$,  and let $ \xi_i= \zeta_i \circ \theta_i(t) $.  We define the horizontal convolution operator as follows: for $ \epsilon >0$ sufficiently small,
$$
\Lambda _ \epsilon F= \int_{ \bbR^{\n-1}} \rho_\epsilon (x_1-y_1) F(y_1,  x_2) dy_1 \,,
$$
where $\rho_\epsilon(x_1) = \epsilon^{-1} \rho( x_1/ \epsilon )$, and $\rho$ is the standard mollifier on $ \bbR$. 
We again drop the $i$ subscript, and substitute
$$
\Phi = \xi ^2 \Lambda _ \epsilon^2 \partial_1^4 (\xi^2\Psi)  \in H^1_0(B)\,, \ \ \Psi= {\bf 1}_{{\overline{B_+}}} \Psi^+ +{\bf 1}_{B_-} \Psi^-
$$
into (\ref{weak2}).
Since differentiation commutes with convolution, we have that
$$
\Phi,_j = \Lambda _ \epsilon ^2 \partial_1^4( \xi ^2\Psi),_j + 2 \xi \xi ,_j \Lambda _\epsilon ^2 \partial_1^4 ( \xi ^2 \Psi) \,.
$$
The variational formulation 
(\ref{weak2}) then takes the following form:
\begin{equation}
\label{170715.0}
 \I_1^\pm + \I_2^\pm
=-\int_{B_+} \partial_1^2 J_+\  \xi ^2 \Lambda _ \epsilon^2 \partial_1^4 (\xi^2\Psi) \ dx \,,
\end{equation}
where
\begin{align*}
\I_1^\pm & =  \int_{{B_\pm}} \Lambda_ \epsilon (  \xi ^2 \mathcal{A}_\pm ^{kj}  \Psi^\pm ,_k),_{11} \, \Lambda_\epsilon^2 (\xi^2  \Psi^\pm),_{j11} \, dx \,, \\
\I_2^\pm & = -2\int_{{B_\pm}}  (\xi \xi ,_j \mathcal{A}_\pm ^{kj} \Psi^\pm ,_k),_1 \, \Lambda_\epsilon^2 (\xi^2  \Psi^\pm) ,_{11} \, dx \,.
\end{align*}
Next, we see that
\begin{align*}
\I_1^\pm & = 
\underbrace{ \int_{{B_\pm}}  \mathcal{A}_\pm ^{kj} \Lambda_\epsilon ( \xi ^2 \Psi^\pm ),_{k11} \,  \Lambda_\epsilon ( \xi ^2\Psi^\pm ),_{j 11 } \, dx}
_{{ \I_1}_a^\pm}
+ \underbrace{ \int_{{B_\pm}} \big(\comm{\Lambda_\epsilon}{\mathcal{A}_\pm ^{kj}} ( \xi ^2\Psi^\pm) ,_{k11}\big) \, 
\Lambda_\epsilon ( \xi ^2\Psi^\pm) ,_{j 11 } \, dx}_{{ \I_1}_b^\pm} \\
& \qquad\qquad 
+ \underbrace{\int_{{B_\pm}} \Lambda _ \epsilon \left[ 2 {\mathcal{A}_\pm ^{kj}},_1 ( \xi ^2 \Psi),_{k1} + {\mathcal{A}_\pm ^{kj}},_{11} ( \xi ^2 \Psi),_k
-2( \xi\xi ,_k \mathcal{A}_\pm ^{kj} \Psi),_{11}\right]
\
\Lambda_\epsilon ( \xi ^2\Psi^\pm) ,_{j 11 } \, dx}_{{ \I_1}_c^\pm} \,,
\end{align*}
where
\begin{equation}\label{commutator}
\comm{\Lambda_\epsilon}{\mathcal{A}_\pm ^{kj} }( \xi ^2 \Psi^\pm) ,_{k11} = \Lambda_\epsilon ( \mathcal{A}_\pm ^{kj} ( \xi ^2 \Psi^\pm) ,_{k11} ) 
- \mathcal{A}_\pm ^{kj}\, \Lambda_\epsilon( \xi ^2 \Psi^\pm) ,_{k11}
\end{equation}
denotes the commutator of the horizontal convolution operator and multiplication by $ \mathcal{A}_\pm ^{kj}$. 
Using the lower-bound (\ref{Abound}), we see that
\begin{equation}
\label{170715.1}
\lambda(t) \|  \p_1^2 \Lambda _ \epsilon  \nabla( \xi ^2 \Psi^\pm) \|^2_{L^2({B_+})} \le
{ \I_1}_a^\pm  \,.
\end{equation}

We let $0 < \delta \ll 1$; we will make use of the Cauchy-Young inequality
$a\, b \le \delta \lambda(t) a^2 + \frac{1}{ 4 \delta \lambda(t)} b^2$ for $a,b\ge 0$.  
 
Using H\"{o}lder's inequality together with the
Sobolev  inequality  $\|f \|_{ L^p(B_\pm)} \le C \|f\|_{H^1(B_\pm)}$ for all $f \in H^1(B_\pm)$ and all $p\in[1, \infty )$,
we have that
\begin{equation*}
| { \I_1}_c^\pm   |\le C \|\mathcal{A}_\pm ^{kj},_{11}\|_{H^1 (B_\pm)} \| \Psi^\pm ,_k\|_{H^1(B_\pm)} \| \Lambda_\epsilon ( \xi ^2 \Psi^\pm) ,_{j 11 }\|_{L^2(B_\pm)}\,.
\end{equation*}
Thanks to (\ref{Abound2}) and (\ref{bounds2}), we then infer that
\begin{align*}
| { \I_1}_c^\pm  |&\le   \P(F(t)) \| \Lambda_\epsilon ( \xi ^2\Psi^\pm) ,_{j 11 }\|_{L^2(B_\pm)}\nonumber\\
&\le \delta\lambda(t) \| \Lambda_\epsilon( \xi ^2 \Psi^\pm) ,_{j 11 }\|^2_{L^2(B_\pm)} +\frac{\P(F(t))}{4\delta\lambda(t)}\nonumber\\
&\le \delta\lambda(t) \| \Lambda_\epsilon ( \xi ^2 \Psi^\pm) ,_{j 11 }\|^2_{L^2(B_\pm)} +(1+(\delta\lambda(t))^{-1}) \mathcal{P} (F(t))\,,
\end{align*}
where we continue to use $\P(F(t))$ to denote a generic polynomial function of $F(t)$.
A similar estimate can be established for the integral $\I_2^\pm$, which provides us with
\begin{align} 
| { \I_1}_c^\pm  |+|\mathcal{I} _2^\pm|
 \le [1+ ( \delta \lambda (t))^{-1} ] \mathcal{P}( F(t))  + \delta \lambda(t) \|\Lambda _ \epsilon ( \xi ^2\Psi^\pm),_{11} \|^2_{H^1( B_\pm)}  \,.
 \label{170715.2}
\end{align}
Also, the integral on the right-hand side of (\ref{170715.0})  has the same upper bound.    

It remains to establish such an upper bound for $| { \I_1}_b|$.  We set $g^\pm_k = ( \xi ^2 \Psi^\pm),_{k11}$; then, 
\begin{align}
 [\Lambda _ \epsilon (\mathcal{A}_\pm ^{kj} \,  g_k^\pm) - \mathcal{A}_\pm ^{kj}  \, \Lambda _ \epsilon  g_k^\pm](x_1,x_2)  
 = \int_{x_1 - \epsilon }^{x_1+\epsilon } \rho_ \epsilon ( { x_1-y_1})
 [ \mathcal{A}_\pm ^{kj} (y_1, x_2) -  \mathcal{A}_\pm ^{kj} (x_1,x_2) ]  g_k^\pm (y_1, x_2) dy_1 \,. \label{nnncs1}
\end{align}
From Morrey's inequality, for all $y_1 \in B(x_1, \epsilon )$,
\begin{equation}\label{nnncs2}
|\mathcal{A}_\pm ^{kj} (x_1,x_2) -  \mathcal{A}_\pm ^{kj} (y_1,x_2)| \le  C  \epsilon 
\sup_{ y_1 \in (x_1 - \epsilon, x_1+\epsilon) }  |{\mathcal{A}_\pm ^{kj}} ,_1 (y_1,x_2)|
\le C  \epsilon  \| \mathcal{A} \|_{H^{3}(B_\pm)}
\,.
\end{equation} 
Substituting (\ref{nnncs2}) into (\ref{nnncs1}) and using Young's inequality for convolution, together with
(\ref{Abound2}),  we see that
\begin{align*}
\|  \comm{\Lambda_\epsilon}{ \mathcal{A} ^{kj}_\pm} ( \xi ^2\Psi^\pm) ,_{k11} \|_{L^2(B_\pm)}
\le C { \epsilon } \| \mathcal{A} _\pm\|_{ H^{3}(B_\pm)} \| (\xi ^2\nabla \Psi^\pm) ,_{11} \|_{L^2(B_\pm)}
\le C { \epsilon }\P(F(t))\| \nabla (\xi ^2\Psi^\pm) ,_{11} \|_{L^2(B_\pm)} \,,
 \end{align*}
so that
\begin{align*} 
| { \I_1}_b| \le C { \epsilon }\P(F(t)) \| \nabla(\xi ^2 \Psi^\pm) ,_{11} \|_{L^2(B_\pm)}^2 \,.
\end{align*} 
We choose $ \epsilon $ sufficiently small so that $C { \epsilon }\P(F(t)) < \lambda(t)/2$.  
By choosing $ \delta >0 $ sufficiently small, we obtain from (\ref{170715.0}), (\ref{170715.1}) and (\ref{170715.2}) the estimate
\begin{equation}\nonumber
 \int_{{B_\pm}} |  \nabla  \Lambda_\epsilon( \xi^2 \Psi^\pm),_{11} |^2 \, dx \le [1+ \lambda (t)^{-1} ] \mathcal{P}( F(t)) \,.
\end{equation}
Passing to the limit as $ \epsilon \to 0 $, we find that
\begin{equation}\label{need1}
 \int_{{B_\pm}} \xi^2 | \nabla  \Psi^\pm,_{11} |^2 \, dx \le [1+ \lambda (t)^{-1} ] \mathcal{P}( F(t)) \,.
\end{equation}

From (\ref{strong}a,b), we have the following identity holding at any point of the interior of $B_\pm$:
\begin{align*} 
-\mathcal{A} ^{22}_\pm \Psi^\pm,_{221}  &= 2 \mathcal{A}^{21}_\pm \Psi^\pm,_{211} +  \mathcal{A}^{11}_\pm \Psi^\pm,_{111}+ 2 \mathcal{A}^{21}_\pm,_1 \Psi^\pm,_{21} + \mathcal{A}^{11}_\pm,_1 \Psi^\pm,_{11} \\
&+  \mathcal{A}^{22}_\pm,_1 \Psi^\pm,_{22} +   \mathcal{A}^{jk}_\pm,_{j1} \Psi^\pm,_{k} +   \mathcal{A}^{jk}_\pm,_{j} \Psi^\pm,_{k1}  \,.
\end{align*} 
The lower-bound (\ref{Abound}) shows that $ \mathcal{A} ^{22}_\pm \ge \lambda(t)$; hence
from (\ref{need1}), (\ref{Abound2}), and (\ref{bounds2}), 
\begin{equation}\label{need2}
 \int_{B_\pm}  \xi^2 |   \Psi^\pm,_{221} |^2 \, dx \le [\lambda (t) ^{-1} + \lambda (t)^{-2} ] \mathcal{P}( F(t)) \,.
\end{equation} 
Then, since
\begin{align*} 
-\mathcal{A} ^{22}_\pm \Psi^\pm,_{222}  &= 2 \mathcal{A}^{21}_\pm \Psi^\pm,_{221} +  \mathcal{A}^{11}_\pm \Psi^\pm,_{211}+ 2 \mathcal{A}^{21}_\pm,_2 \Psi^\pm,_{21} + \mathcal{A}^{11}_\pm,_2 \Psi^\pm,_{11} \\
&+  \mathcal{A}^{22}_\pm,_2 \Psi^\pm,_{22} +   \mathcal{A}^{jk}_\pm,_{j2} \Psi^\pm,_{k} +   \mathcal{A}^{jk}_\pm,_{j} \Psi^\pm,_{k2}  \,,
\end{align*} 
we use (\ref{need2}) to conclude that
\begin{equation}\label{need3}
 \int_{B_\pm}  \xi^2 |   \Psi^\pm,_{222} |^2 \, dx \le [\lambda (t) ^{-2} + \lambda (t)^{-3} ] \mathcal{P}( F(t)) \,.
\end{equation} 

Given the interior estimates, we
sum  (\ref{need1}),  (\ref{need2}), and (\ref{need3})  over our finite cover index $i=1,...,N$, and find that 
\begin{equation}\label{need6}
\| \psi^+ ( \cdot , t) \|_{H^3(\Omega^+(t)}^2 + \| \psi^- ( \cdot , t) \|_{H^3(\Omega^-(t) \cap B(0,R(t)))}^2 
 \le \mathcal{P}( F(t)) \,,
\end{equation} 
where we have used the fact that $ \lambda (t)^{-1} \le \P(F(t))$.
Then since $u^\pm = \nabla ^\perp \psi^\pm$,  (\ref{need6}) shows 
that
\begin{equation}\label{need7}
\| u^+ ( \cdot , t) \|_{H^2(\Omega^+(t)}^2 + \| u^- ( \cdot , t) \|_{H^2(\Omega^-(t) \cap B(0,R(t)))}^2 
 \le  \mathcal{P}( F(t)) \,.
\end{equation} 
Note that the estimate (\ref{need7}) has been obtained for the case that $\Gamma(t)$ is of Sobolev class $H^{3.5})$ so that we can indeed
build further regularity for $u^\pm$.

\vspace{.1 in}
\noindent
{\bf Step 3. $H^3$ regularity for $u^+$ and $u^-$.}   We will now use estimate (\ref{need7}) to build the $H^3$ regularity for $u^+$ and $u^-$.
On $\Gamma(t)$, 
we let $ \nabla _\tau u$ denote the directional derivative of $u$ in the direction $\tau$ and similarly, we let  $ \nabla _n u$ denote the directional derivative of $u$ in the direction $n$; for example, in components $\nabla _\tau u^i = u^i,_j \tau^j$.    We make use of the following identities on
$\Gamma(t)$:
 \begin{subequations}
 \label{divcurl}
\begin{align} 
\operatorname{div} u & = \nabla _ \tau u \cdot\tau +\nabla _n u \cdot n \,, \\
\operatorname{curl} u & = \nabla _ \tau u \cdot n - \nabla _n u \cdot \tau \,.
\end{align} 
 \end{subequations}
Since $u( \cdot ,t)$ is continuous across $\Gamma(t)$,  it follows that
$$
\jump{  \nabla _n u \cdot \tau }  = - \jump{ \operatorname{curl} u}  =  -1 \,.
$$
Then, using (\ref{divcurl}a), and the identity
$$
 \jump{  \nabla _n u  } = \jump{  \nabla _n u \cdot \tau } \tau + \jump{  \nabla _n u \cdot n } n \,,
$$
we see that
$$
 \jump{  \nabla _n u  } = - \tau \,.
$$

From (\ref{need7}),  the velocity field $u$ is a solution
 to the following two-phase elliptic problem:
\begin{alignat*}{2}
\Delta u^\pm &= 0 \qquad&&\text{in}\quad\Omega(t)^\pm\,, \\
\jump{u} &= 0 \qquad&&\text{on}\quad\Gamma(t)\,,\\
\bigjump{\nabla_n u} &= -\tau \qquad&&\text{on}\quad\Gamma(t)\,
\end{alignat*}
with variational form given by
\begin{equation}\label{weaku}
 \int_{\Omega^+(t)} \nabla u^+ ( \cdot, t): \nabla w \, dx +  \int_{\Omega^-(t)} \nabla u^-(\cdot, t) : \nabla w \, dx = \int_{\Gamma(t)} \tau( \cdot ,t) \cdot w\, dS(t) \ \ 
 \forall w \in H^1 ( \mathbb{R}  ^2; \mathbb{R}^2  ) \,,
 \end{equation} 
where $ A:B = A^i_j B^i_j$ for any 2x2 matrices $A$ and $B$.
  
Again dropping the subscript $i$,  we write (\ref{weaku}) locally as
\begin{equation}\label{weaku2}
 \int_{\V(t) \cap\Omega^+(t)} \nabla u^+ ( \cdot , t): \nabla w \, dx +  \int_{\V(t)\cap\Omega^-(t)} \nabla u^-(\cdot,  t) : \nabla w \, dx = 
 \int_{\V(t)\cap\Gamma(t)} \tau( \cdot ,t) \cdot w\, dS(t) 
\,.
 \end{equation} 
for all $w \in H^1_0 ( \V(t); \mathbb{R}^2  ) $.    We set $U= u \circ \theta $ and $W= w\circ \theta $.
 By the change-of-variables formula, (\ref{weaku2}) becomes
\begin{equation}\label{weaku3}
 \int_{{B_+}} \mathcal{A}_+ ^{kj} U_{,k} ^+ ( \cdot, t) \cdot   W_{,j} \, dx + \int_{B_-} \mathcal{A}_- ^{kj} U ^-_{,k}  ( \cdot, t) \cdot   W_{,j} \, dx
  =
 - \int_{B_0} \theta,_1\cdot W\, dS  \ \ \forall  W \in H^1_0 ( B; \mathbb{R}^2   ) \,.
 \end{equation} 
We then substitute
$$
W = \xi ^2 \Lambda _ \epsilon^2  \partial_1^4( \xi ^2   U ) \in H^1_0(B)\,, 
$$
into (\ref{weaku3}).  By repeating the identical argument of Step 2 above, we find that
\begin{equation}\label{need8}
\| u^+ ( \cdot , t) \|_{H^3(\Omega^+(t)}^2 + \| u^- ( \cdot , t) \|_{H^3(\Omega^-(t) \cap B(0,R(t)))}^2 
 \le \P(F(t) )\,.
\end{equation} 

\noindent
{\bf Step 4. $H^4$ regularity for $u^+$ and $u^-$.}   We continue to assume that $k=4$ so that the boundary $\Gamma(t)$ is of Sobolev
class $H^{3.5}$ and our change-of-variables $\theta_i^\pm(t) \in H^4(B_\pm)$.  We  will now show that $u^+$ and $u^-$ have 
$H^4$ regularity.

To do so, we 
let the test function   $W = -\xi ^2 \Lambda _ \epsilon^2  \partial_1^6( \xi ^2   U )$
in (\ref{weaku3}).  By a slight modification of Step 3, we find that
\begin{equation}\label{need9}
\| u^+ ( \cdot , t) \|_{H^4(\Omega^+(t)}^2 + \| u^- ( \cdot , t) \|_{H^4(\Omega^-(t) \cap B(0,R(t)))}^2 
 \le \P(F(t) )\,.
\end{equation}
There are new types of integrals that arise in establishing the $H^4$ regularity; namely, integrals that have highest-order derivatives on
both $U^\pm$ and $\theta^\pm$.

One of these integrals is analogous to one of the integrals in ${ \I_1}_c^\pm$ defined in Step 1 and is written as
\begin{equation*}
\mathcal{J} ^\pm  = \int_{{B_\pm}} \Lambda_\epsilon \left[  \mathcal{A}_\pm ^{kj},_{111} ( \xi ^2 U^\pm) ,_k\right] \,  \Lambda_\epsilon ( \xi ^2U^\pm) ,_{j 111 }\, dx\,.
\end{equation*}
We estimate the integral   $| \mathcal{J} ^\pm|$ using an  $L^2$-$L^\infty$-$L^2$ H\"{o}lder's inequality:
\begin{equation*}
|\mathcal{J}^\pm | \le \|\mathcal{A}_\pm ^{kj},_{111}\|_{L^2(B_\pm)} \| ( \xi ^2U)^\pm ,_k \|_{L^\infty(B_\pm)}\, \| \Lambda_\epsilon( \xi ^2 U)^\pm ,_{j 111 }\|_{L^2(B_\pm)}\,,
\end{equation*}
which, with the Sobolev embedding of $H^2(B_\pm)$ into $L^\infty(B_\pm)$, shows that
\begin{equation*}
|\mathcal{J} ^\pm |  \le C \|\mathcal{A}_\pm ^{kj},_{111}\|_{L^2(B_\pm)} \|U^\pm ,_k \|_{H^2(B_\pm)}\, \| \Lambda_\epsilon( \xi ^2 U)^\pm ,_{j 111 }\|_{L^2(B_\pm)}\,.
\end{equation*}
Using the estimate (\ref{thetabound}) with $k=4$ together 
with the previous lower-order estimate (\ref{need8})of $u^\pm$ in $H^3$, we obtain that
\begin{equation*}
|\mathcal{J} ^\pm |  \le \P(F(t)) \| \Lambda_\epsilon( \xi^2 U^\pm) ,_{j 111 }\|_{L^2(B_\pm)}\,,
\end{equation*}
which is just a linear term in $ \| \Lambda_\epsilon(\xi^2 U^\pm ),_{j 111 }\|_{L^2(B_\pm)}$, easily controlled by the {\it energy integral}
\begin{equation*}
\mathcal{I}_{1_a,4}^\pm= \lambda (t) \int_{{B_\pm}} |  \Lambda_\epsilon  \nabla(\xi^2 U)^\pm ,_{111} |^2\,  dx\,,
\end{equation*}
analogous to the term  $ {\mathcal{I} _1}_a^\pm$ in Step 2 above.

Other integral terms set on $B_{\pm}$ of this type arise and can be treated similarly. There is one slight variation:  the boundary integral term
\begin{equation*}
\mathcal{I}_\p=\int_{B_0} \partial_1^{3} \Lambda _ \epsilon ( \xi ^2\theta,_1) \cdot   \partial_1^{3}\Lambda _ \epsilon  (\xi ^2U) \ dS\,,
\end{equation*}
for which we simply notice that
\begin{align*}
|\mathcal{I}_\p|&\le  \|\partial_1^3 \Lambda _ \epsilon ( \xi ^2\theta,_1)\|_{H^{-0.5}(B_0)} \| \partial_1^{3}\Lambda _ \epsilon( \xi ^2  U)\|_{H^{0.5}(B_0)}\\
&\le C \|\theta,_1\|_{H^{3.5}(B_0)} \| \partial_1^{3}\Lambda _ \epsilon  ( \xi ^2U)\|_{H^{1}(B_+)}\,,
\end{align*}
where we have used the properties of the convolution operator for the first norm on the right-hand side, and the trace theorem for the second norm. This then provides us with
\begin{equation*}
|\mathcal{I} _{\p}|  \le \P(F(t)) \| \Lambda_\epsilon ( \xi ^2U^\pm ),_{111 }\|_{H^1 (B_+)}\,,
\end{equation*}
which is a linear term controlled in a similar manner as $\mathcal{I}_{1_a,4}^\pm$.

\vspace{.1 in}
\noindent
{\bf Step 5. $H^k$ regularity for $u^+$ and $u^-$.} Letting $W = (-1)^{k-1}\xi ^2 \Lambda _ \epsilon^2  \partial_1^{2(k-1)}( \xi ^2   U )$
in (\ref{weaku3}) and repeating Step 3, concludes the proof.
\qed

\section{Existence and regularity of the 3-D vortex patch boundary $\Gamma(t)$ and $u_\pm$: Proof of Theorem \ref{thm3}}\label{sec5}

\subsection{The two-phase elliptic problem for velocity}
As defined in Section \ref{3dpatch}, the vortex patch boundary $\Gamma(t)$ is a closed 2-D surface which is diffeomorphic to 
the unit sphere $ \mathbb{S}  ^2$, and that $\Omega^+(t)$ is an open subset of $ \mathbb{R}^3  $ such that $\partial \Omega^+(t) = \Gamma(t)$, and $\Omega^-(t)  = \mathbb{T}  ^3 - \overline{\Omega^+(0)}$.   We denote $\mathbb{T}  ^3$ by $\Omega$ in what follows,
and we set $\Omega^\pm = \Omega^\pm(0)$.

 We let $\tau_1( \cdot,  t)$ and $\tau_2(\cdot , t)$ denote an orthonormal basis of the tangent plane to 
each point of $\Gamma(t)$, so that $(\tau_1,\tau_2,n)$ is a direct orthonormal frame of $ \mathbb{R}^3  $.
We let $ \nabla _{\tau_\alpha} u$ ($\alpha=1,2$) denote the directional derivative of $u$ in the direction $\tau_\alpha$ and similarly, we let  
$ \nabla _n u$ denote the directional derivative of $u$ in the direction $n$; for example, in components $\nabla _{\tau_\alpha} u^i = u^i,_j (\tau_\alpha)^j$.    We make use of the following identities on
$\Gamma(t)$:
 \begin{subequations}
 \label{divcurl3d}
\begin{align} 
\operatorname{div} u & = \nabla _ {\tau_\alpha} u \cdot\tau_\alpha +\nabla _n u \cdot n \,, \\
\operatorname{curl} u & = (\nabla _ {\tau_2} u \cdot n - \nabla _n u \cdot \tau_2)\tau_1- (\nabla _ {\tau_1} u \cdot n - \nabla _n u \cdot \tau_1)\tau_2\,,
\end{align} 
 \end{subequations}
where we have used the fact that $ \operatorname{curl} u^+ \cdot n =0$ n $\Gamma(t)$ by (\ref{vp3d}h).
Since $u( \cdot ,t)$ is continuous across $\Gamma(t)$,  it follows that
\begin{alignat}{2} 
\jump{  \nabla _n u \cdot \tau_1 }  & = \jump{ \operatorname{curl} u\cdot\tau_2}  &&=  \operatorname{curl} u^+\cdot\tau_2\,, \\
-\jump{  \nabla _n u \cdot \tau_2 }  &= \jump{ \operatorname{curl} u\cdot\tau_1}  && =  \operatorname{curl} u^+\cdot\tau_1\,.
\end{alignat} 
Then, using (\ref{divcurl3d}a), and the identity
\begin{equation} \label{cs1.10.12}
 \jump{  \nabla _n u  } = \jump{  \nabla _n u \cdot {\tau_\alpha} } \tau_\alpha + \jump{  \nabla _n u \cdot n } n \,,
\end{equation} 
we see that
\begin{equation}\label{cs1.10.11}
 \jump{  \nabla _n u  } =  \operatorname{curl} u^+\cdot\tau_2\ \tau_1 - \operatorname{curl} u^+\cdot\tau_1\ \tau_2 \,.
\end{equation} 
The velocity field $u = u_+{\bf 1}_{\overline{\Omega^+(t)}} + u_-{\bf 1}_{\Omega^-(t)}$ is a weak solution
 to the following two-phase elliptic problem:
\begin{subequations}\label{strongu3d}
\begin{alignat}{2}
-\Delta u^+ &=  \operatorname{curl} \operatorname{curl} u^+ \qquad&&\text{in}\quad\Omega(t)^+\,, \\
\Delta u^- &=  0 \qquad&&\text{in}\quad\Omega(t)^-\,, \\
\jump{u} &= 0 \qquad&&\text{on}\quad\Gamma(t)\,,\\
\bigjump{\nabla_n u} &= \operatorname{curl} u^+\cdot\tau_2\ \tau_1 - \operatorname{curl} u^+\cdot\tau_1\ \tau_2  \qquad&&\text{on}\quad\Gamma(t)\,,
\end{alignat}
\end{subequations}
with variational (or weak) form given as follows:  For all  vector test-functions $w \in H^1 ( \Omega )$ given by
\begin{align} 
\label{weaku3d}
& \int_{\Omega^+(t)} \nabla u^+ ( \cdot, t): \nabla w \, dx +  \int_{\Omega^-(t)} \nabla u^-(\cdot, t) : \nabla w \, dx =
 \int_{\Omega^+(t)} \operatorname{curl}  u^+ ( \cdot, t) \cdot  \operatorname{curl}  w\, dx  \nonumber\\
 &\qquad \qquad \qquad \qquad + \int_{\Gamma(t)} [n\times \operatorname{curl} u^+]  \cdot w\, dS(t)\nonumber\\
&\qquad \qquad \qquad \qquad + \int_{\Gamma(t)} [\operatorname{curl} u^+( \cdot ,t)\cdot\tau_2\ \tau_1 - \operatorname{curl} u^+( \cdot ,t)\cdot\tau_1\ \tau_2]  \cdot w\, dS(t)  \,,
\end{align} 
where $ A:B = A^i_j B^i_j$ for any 3x3 matrices $A$ and $B$.
Next we notice that 
\begin{equation} \label{cs2.10.11}
n\times\operatorname{curl} u^+=n\times [\operatorname{curl} u^+\cdot\tau_1\ \tau_1+\operatorname{curl} u^+\cdot\tau_2\ \tau_2]
=\operatorname{curl} u^+\cdot\tau_1\ \tau_2-\operatorname{curl} u^+\cdot\tau_2\ \tau_1\,,
\end{equation} 
so that the boundary integral terms of (\ref{weaku3d}
) cancel each other, and we are left with
\begin{equation} 
\label{weaku3d2}
 \int_{\Omega^+(t)} \nabla u^+ ( \cdot, t): \nabla w \, dx +  \int_{\Omega^-(t)} \nabla u^-(\cdot, t) : \nabla w \, dx =
 \int_{\Omega^+(t)} \operatorname{curl}  u^+ ( \cdot, t) \cdot  \operatorname{curl}  w\, dx  \ \ \forall w \in H^1(\Omega)   \,.
\end{equation} 

We let $\eta(x,t)$ denote the Lagrangian flow of $u$, as defined in (\ref{lag-flow}).
We set $v^\pm = u^\pm \circ \eta$ and
we define $A(x,t) = [ \nabla \eta(x,t)] ^{-1}$.   Then,  letting $\phi = w \circ \eta$, (\ref{weaku3d2}) can be written as
\begin{equation} 
\label{weaku3dlag}
 \int_{\Omega^+} \mathcal{A} ^{jk} \frac{\partial v^+}{\partial x_j} \cdot  \frac{\partial \phi}{\partial x_k}  \, dx 
 +  \int_{\Omega^-} \mathcal{A} ^{jk} \frac{\partial v^-}{\partial x_j} \cdot  \frac{\partial \phi}{\partial x_k} \, dx =
 \int_{\Omega^+}  \left[\operatorname{curl}  u^+\right]  \circ \eta  \cdot  \left[ \operatorname{curl}(  \phi \circ \eta ^{-1} ) \right] \circ \eta \, \det \nabla \eta dx   
\end{equation} 
for all $ \phi \in H^1(\Omega) $, 
where
$$
\mathcal{A} ^{jk} = A^j_i A^k_i  \det \nabla \eta\,.
$$
For solutions to the Euler equations (\ref{euler}), $ \operatorname{div} u=0$ so that $\det \nabla \eta=1$, but the general form  
(\ref{weaku3dlag}) will be necessary for our fixed-point scheme.

\subsection{The fixed-point procedure for existence of solutions to the vortex patch problem}

In Section \ref{sec::elliptic_regularity}, we will establish 
the fundamental elliptic regularity results for a Lagrangian variational formulation  as in (\ref{weaku3d2}).
Using that regularity theory, we now prove the existence and regularity  of solutions to the 3-D vortex patch problem; our solutions have
smooth Sobolev regularity on both sides of the vortex patch boundary $\Gamma(t)$  and are globally in $H^1(\Omega)$.

\subsubsection{The functional framework}
We remind the reader that we use $\Omega$ to denote  a periodic box $[- \ell ,\ell]^3$  in $ \mathbb{R}  ^3$ with opposite sides of the
box  identified with one another, and with $\ell$ taken sufficiently large so that $\overline{\Omega^+(0)} \subset \Omega$.
  Functions defined on $\Omega$ are $2\ell$-periodic in each of the three coordinate directions, i.e.,
$$
u(x + 2\ell e_i) = u(x) \ \ \forall x \in \mathbb{R}^3  , i=1,2,3 \,,
$$
were $e_1=(1,0,0)$, $e_2=(0,1,0)$ and $e_3=(0,0,1)$.  
  Functions in
$ H^1(\Omega) $ satisfy periodic boundary conditions, and $H^1(\Omega)$ can be identified with $H^1 ( \mathbb{T}  ^3)$.

Given $T>0$ and $M>0$ assumed fixed, we work in the Lagrangian framework and define the bounded closed convex and nonempy set
\begin{equation}
\label{fp1}
{\bf V}_M^k=\{v\in L^2(0,T; H^1(\Omega))\cap L^2(0,T;H^k(\Omega^\pm));\ 
\  \|v\|_{L^2(0,T; H^1(\Omega))}+\|v^\pm\|_{L^2(0,T; H^k(\Omega^\pm))}\le M\}\,
\end{equation}
for integers $k\ge 3$.
For any $v\in {\bf V}_M^k$, we define the Lagrangian flow
\begin{equation}
\label{fp2}
\eta(x,t)=x+\int_0^t v(x,s)\ ds\,,
\end{equation}
which therefore, from (\ref{fp1}), satisfies $\eta\in \mC^0(0,T; H^1(\Omega))\cap \mC^0(0,T;H^k(\Omega^\pm))$.   Note, also, that since
the vortex patch boundary is transported by the fluid velocity, we have that 
$$
\Gamma(t) = \eta(\Gamma, t)\,.
$$
Hence, the regularity of the velocity field in $\Omega^+$ provides us with the regularity of $\eta$ in $\Omega^+$; the trace theorem then provides
the regularity of $\eta$ on $\Gamma$, and this in turn provides the regularity of the vortex patch boundary $\Gamma(t)$.

Since $\Omega$ is a periodic box, and hence convex, 
any two distinct points  $x$ and $y$ in $\overline{\Omega}$ can be connected by the straight-line segment $(x,y)$; therefore, 
by splitting the segment $(x,y)$ into  a finite union of subsegments $(x_i,x_{i+1})$, we can assume that each subsegment $(x_i,x_{i+1})$ is contained in 
either
$\Omega^+$ or $\Omega^-$.   It follows from (\ref{fp2}) that 
\begin{align*}
\eta(x,t)-\eta(y,t)=& x-y+\int_0^t v(x,s)-v(y,s)\ ds\\
=& x-y+\int_0^t v(x_1,s)-v(x_K,s)\ ds\\
=& x-y+\sum_{i=1}^{K-1} \int_0^t v(x_i,s)-v(x_{i+1},s)\ ds\,,
\end{align*}
which therefore shows by the fundamental theorem of calculus, that since each $(x_i,x_{i+1})$ is either contained in 
$\Omega^+$ or $\Omega^-$, that
\begin{equation*}
\left|\eta(x,t)-\eta(y,t)- x-y\right|\le  C\sum_{i=1}^{n-1} |x_i-x_{i+1}| \int_0^t \|\nabla v(\cdot,s)\|_{L^\infty(\Omega^+)}+\|\nabla v(\cdot,s)\|_{L^\infty(\Omega^-)}\ ds\,,
\end{equation*}
and from the Sobolev embedding theorem,
\begin{equation*}
\left|\eta(x,t)-\eta(y,t)- x-y\right|\le  C \sum_{i=1}^{n-1} |x_i-x_{i+1}| \int_0^t \|\nabla v(\cdot,s)\|_{H^2(\Omega^+)}+\|\nabla v(\cdot,s)\|_{H^2(\Omega^-)}\ ds\,.
\end{equation*}
From the definitions (\ref{fp1}) and (\ref{fp2}), it follows that
\begin{equation}
\label{fp4}
\left|\eta(x,t)-\eta(y,t)- x-y\right|\le  \sum_{i=1}^{n-1} |x_i-x_{i+1}| 2\sqrt{t} M \le 2\sqrt{T} M C |x-y|\,.
\end{equation}
We now choose $T$ such that
\begin{equation}
\label{fp5}
0< T\le\frac{1}{16M^2 C^2}\,,
\end{equation}
so that for any $x$ and $y$ in $\overline{\Omega}$,
\begin{equation*}
\left|\eta(x,t)-\eta(y,t)\right|\ge  \frac{1}{2} |x-y|\,,
\end{equation*}
which establishes the injectivity of $\eta$ in $\overline{\Omega}$. Furthermore, since
\begin{equation}
\label{fp6.0}
\left|\nabla\eta(x,s)-\text{Id}\right|\le \left|\int_0^t \nabla v(x,s)\ ds\right| 
\le \left|\int_0^t \|\nabla v^+(\cdot,s)\|_{L^\infty(\Omega^+)}+\|\nabla v^-(\cdot,s)\|_{L^\infty(\Omega^-)} \ ds\right|\le 2 C\sqrt{T} M\,,
\end{equation}
due to the continuity of the determinant at $\text{Id}$ in $\mathbb{R}^9$,  we can choose $T>0$ small enough,
 so that for all $x\in\Omega$ and $0\le t\le T$,
\begin{equation}
\label{fp6}
{\frac{3}{2}} \ge \text{det}\nabla\eta(x,t)\ge\frac{1}{2}\,,
\end{equation}
which shows, with the previously established injectivity,  that $\eta( \cdot ,t)$ is an $H^4$ diffeomorphism from $\Omega^\pm$ onto
the image $\eta( \Omega^\pm,t)$, and a homeomorphism from $\Omega$ onto $\eta(\Omega,t)$.  
Finally, by choosing $T$ sufficiently small we can ensure the strict positivity of the coeffiicient matrix $ \mathcal{A} $:
 for all $t\in[0,T]$,
\begin{equation}\label{Acoer}
w^T\, \mathcal{A}^\pm_i (x,t)\,  w  \ge {\frac{1}{4}}  |w|^2 \ \forall w \in \mathbb{R}^2 \,, \ \ x \in \Omega\,.
\end{equation} 

\subsubsection{The fixed-point procedure}
We define the Lagrangian curl operator $ \operatorname{curl} _\eta$ as follows: if $u(y,t)$ is an Eulerian vector, and $v = u \circ \eta$, then
we $\operatorname{curl} _\eta v = [ \operatorname{curl} u] \circ \eta $ where for any differential vector field $F$,  and for $i=1,2,3$,
\begin{equation}\label{curleta}
 [\operatorname{curl} _\eta F ]_i =\varepsilon_{i jk} \frac{\partial F^k}
{\partial x_r} A^r_j \,,
\end{equation} 
where $ \varepsilon_{ ijk} $ denotes the permutation symbol, so that $ \varepsilon_{ ijk} =1$ for even permutations, $ \varepsilon_{ ijk} =-1$ for odd
permutations, and $ \varepsilon_{ ijk} =0$ otherwise.
We will employ a fixed-point procedure on the variational equation (\ref{weaku3dlag}), which we write as
\begin{equation} 
\label{weaku3dlag2}
 \int_{\Omega^+} \mathcal{A} ^{jk} \frac{\partial v^+}{\partial x_j} \cdot  \frac{\partial \phi}{\partial x_k}  \, dx 
 +  \int_{\Omega^-} \mathcal{A} ^{jk} \frac{\partial v^-}{\partial x_j} \cdot  \frac{\partial \phi}{\partial x_k} \, dx =
 \int_{\Omega^+}  \left[\operatorname{curl}  u^+\right]  \circ \eta  \cdot  \operatorname{curl} _\eta \phi \, \det \nabla \eta dx   
\end{equation} 
for all $ \phi \in H^1(\Omega) $.   From (\ref{lag_vor}), 
\begin{equation}\label{cs10.08.1}
 \operatorname{curl} u \circ \eta =  \nabla  \eta    \cdot \omega_0 \,, \ \ \omega_0 = \operatorname{curl} u_0^+ {\bf 1}_{\Omega^+} \,.
\end{equation} 
Since $ \operatorname{div} \omega _0=0$, using the formula (\ref{cs10.08.1}), we see that
\begin{equation}\label{ncs10}
\int_{\Omega^+}\operatorname{curl} u^+ \circ \eta \ dx = \int_{\Gamma} \eta \  (\operatorname{curl} u_0^+ \cdot n( \cdot , 0)) \, dS(0)=0\,,
\end{equation} 
where the last equality follows from (\ref{vp3d}h).
 
Now, given $v$ in our convex set ${\bf V}_M^k$ and letting $\eta$ denote the homeomorphism defined in (\ref{fp2}), 
we define 
\begin{equation}
\label{fp8}
\mathcal{C}(v)(x,t) =  \nabla \eta(x,t)\cdot \omega_0(x) \ \  \text{ in } \ \Omega \,.
\end{equation}

Notice that
for any $x\in\Gamma$,  the trace on $\Gamma$ of $\mathcal{C}(v)(x,t)\cdot n(\eta(x,t),t)$ (the trace taken from from $\Omega^+$)
is zero, and is thus equal to the trace of of $\mathcal{C}(v)(x,t)\cdot n(\eta(x,t),t)$ evaluated from $\Omega^-$.   
 To see this, we use an important
geometric property of the inverse deformation matrix  $A(x,t) =  [ \nabla \eta(x,t)] ^{-1}$; namely,
if $N(x):= n(0,x)$ denotes the outward unit normal to $\p \Omega^+$ and if $n(\eta(x,t),t)$ denotes the outward unit normal to $\p \Omega^+(t)$, then
$$
n_i(\eta(x,t), t) = \frac{A^k_i N_k}{ |A^T N|} \,.
$$
Hence, it follows that
\begin{align}
\mathcal{C}(v)\cdot n \circ \eta =& \mathcal{C}(v)^i \frac{A^k_i N_k}{|A^T N|} =  \frac{1}{|A^T N|}  N_k A^k_i \frac{\p \eta^i}{\p x_l} \omega_0^l
=  \frac{1}{|A^T N|}  N_k \omega_0^k =0 \,,
\label{transport1.b}
\end{align}
where we have again used (\ref{vp3d}h) for the last equality.

Furthermore,  the same computation as in (\ref{ncs10}) shows that
\begin{align}
\int_{\Omega^+} \mathcal{C} (v) \ dx = \int_{\Gamma} \eta \  (\operatorname{curl} u_0^+ \cdot N) \, dS(0)=0\,.
\label{transport2}
\end{align}

Now, for each time $t\in [0,T]$, we construct a solution
$\bar{v}(\cdot,t)$ to the following variational problem:  
\begin{equation}
\label{fp9}
 \int_{\Omega^+} \mathcal{A} ^{jk} \frac{\partial v^+}{\partial x_j} \cdot  \frac{\partial \phi}{\partial x_k}  \, dx 
 +  \int_{\Omega^-} \mathcal{A} ^{jk} \frac{\partial v^-}{\partial x_j} \cdot  \frac{\partial \phi}{\partial x_k} \, dx 
 = \int_{\Omega^+} \mathcal{C}(v)\cdot \operatorname{curl} _\eta \phi \  \det \nabla \eta dx \ \ \forall \phi\in H^1(\Omega)\,.
\end{equation}
From (\ref{Acoer}) and
 the Lax-Milgram theorem, there exists a unique 
 periodic solution $\bar v( \cdot , t) \in H^1(\Omega)$ for each fixed $t\in[0,T]$, satisfying
\begin{equation}
\label{fp9.2}
\int_{\Omega}\bar{v}\ dx=0\,,
\end{equation}
Furthermore, since $\mathcal{C}(v) \in H^{k}\Omega^+$, $k\ge 2$,  we may integration-by-parts on the right-hand side of (\ref{fp9}).  We use
the fact that the cofactor matrix $a(x,t)$,  defined by $a= \det \nabla \eta \, A$, satisfies the Piola identity $\frac{\p}{\p x_k }a^k_i=0$ for $i=1,2,3$.
Thus, we see that (\ref{fp9}) can be written as follows: 
for all $\phi \in H^1(\Omega) $, 
\begin{align*}
 \int_{\Omega^+} \mathcal{A} ^{jk} \frac{\partial v^+}{\partial x_j} \cdot  \frac{\partial \phi}{\partial x_k}  \, dx 
 +  \int_{\Omega^-} \mathcal{A} ^{jk} \frac{\partial v^-}{\partial x_j} \cdot  \frac{\partial \phi}{\partial x_k} \, dx
=& \int_{\Omega^+} \operatorname{curl} _\eta \mathcal{C}(v)  \cdot \phi \det \nabla \eta\,  dx\nonumber\\
& \qquad +  \int_{\partial\Omega^+} \mathcal{C}(v) \times (a^T N) \, \phi\,  dS(0)\,,
\end{align*}
which is the variational form of the general elliptic system (\ref{vector-valued_elliptic_eq}) studies in Section \ref{sec::elliptic_regularity},  with
forcing functions
\begin{align*}
{\bf f_-}=0\,,\ \ 
{\bf f_+} = \operatorname{curl} _\eta  \mathcal{C} (v) \, \det \nabla \eta \,,\ \ \text{ and } \ \ 
{\bf g}= \mathcal{C} (v) \times (a^T N) \,,
\end{align*}
for which our regularity result Theorem \ref{thm:vector-valued_elliptic_eq_Sobolev_coeff} applies.
We therefore have that (for $k\ge 2$)
\begin{align} 
& \|\bar v^+\|_{H^{k+1}(\Omega^+)}  +  \|\bar v^-\|_{H^{k+1}(\Omega^-)}   \nonumber \\
& \qquad
 \le C \Big[\|\f_\pm\|_{H^{k-1}(\Omega^\pm)}+ \|\g\|_{H^{k-0.5}(\Gamma)}  
+ \P\big(\|\mathcal{A}_\pm\|_{H^\rk(\Omega^\pm)}\big) \Big(\|\f_\pm\|_{L^2(\Omega^\pm)} + \|\g\|_{H^{-0.5}(\Gamma)}\Big) \Big]\,, \label{nncs1}
\end{align} 
where $\P$ is a polynomial function and the constant 
 $C$ depends on $\Omega^\pm$.

 From (\ref{fp6.0}) and for 
\begin{equation}
\label{2309.1} 
 \sqrt{T}\ M\le \epsilon_0\,,
 \end{equation}
 with $0 < \epsilon_0\ll 1$ denoting a sufficiently small constant  (which is independent of $M$), that for any $v\in{\bf V}_M^k$ 
 \begin{equation}
 \label{2309.2}
 \|\eta\|_{H^{k+1}(\Omega^+)}\le C|\Omega|\,.
 \end{equation}
 Since from the definition (\ref{fp8}),
 \begin{equation}
 \label{2309.3}
 \|\mathcal{C}(v)\|_{H^{k}(\Omega^+)}\le \|u_0\|_{H^{k+1}(\Omega^+)}(1+C\sqrt{T} M)\,,
 \end{equation}
 we then infer from (\ref{2309.3}), (\ref{2309.2}) and (\ref{nncs1}) that
 \begin{equation*} 
 \|\bar v^+\|_{H^{k+1}(\Omega^+)}  +  \|\bar v^-\|_{H^{k+1}(\Omega^-)}   
\le C \Big[ C|\Omega|\|u_0\|_{H^{k+1}(\Omega^+)}(1+C\epsilon_0) 
(1+ \P(|\Omega|))  \Big]\,.
\end{equation*} 
Therefore,
\begin{equation*} 
 \|\bar v^+\|_{L^2(0,T;H^{k+1}(\Omega^+))}  +  \|\bar v^-\|_{L^2(0,T;H^{k+1}(\Omega^-))}   
\le 2 C \Big[ C|\Omega|\|u_0\|_{H^{k+1}(\Omega^+)}(1+C\epsilon_0)
(1+ \P(|\Omega|))  \Big]\sqrt{T}\,,
\end{equation*} 
which thanks to (\ref{2309.1}) shows that
\begin{equation*} 
 \|\bar v^+\|_{L^2(0,T;H^{k+1}(\Omega^+))}  +  \|\bar v^-\|_{L^2(0,T;H^{k+1}(\Omega^-))}   
\le 2 C \Big[ C|\Omega|\|u_0\|_{H^{k+1}(\Omega^+)}(1+C\epsilon_0)
(1+ \P(|\Omega|))  \Big]\frac{\epsilon_0}{M}\,.
\end{equation*}
  This inequality  
  then  
  proves that $\bar{v}\in{\bf V}_M^k$ for $$M^2=2 C \Big[ C|\Omega|\|u_0\|_{H^{k+1}(\Omega^+)}(1+C\epsilon_0)
(1+ \P(|\Omega|))  \Big]\epsilon_0\,.$$ 
 
Moreover, it is easy to check that the map  $\Theta:\ v \mapsto \bar{v}$ is sequentially weakly lower semi-continuous;  that is,  if 
$v_j\rightharpoonup v$ in the weak topology of the norm defining the closed convex set ${\bf V}_M^k$,
 then $\Theta v_j \rightharpoonup  \Theta v$.  Therefore, by Schauder's second fixed-point theorem (see \cite{Zeidler1986}, page 452), which is itself a corollary of Tyhonov's fixed-point theorem, we then have that $\Theta$ has a fixed point in ${\bf V}_M^k$.
 
\subsubsection{The fixed point is a solution to the Euler equations}
 We now explain why this fixed point, $v=\bar{v}$,  is  indeed a solution of the Euler equations with initial data $u_0$, and hence a solution to the
 3-D vortex patch boundary.
 At a fixed point $v=\bar v$,   (\ref{fp9}) becomes the following variational problem:
\begin{equation}
 \int_{\Omega^+} \mathcal{A} ^{jk} \frac{\partial v^+}{\partial x_j} \cdot  \frac{\partial \phi}{\partial x_k}  \, dx 
 +  \int_{\Omega^-} \mathcal{A} ^{jk} \frac{\partial v^-}{\partial x_j} \cdot  \frac{\partial \phi}{\partial x_k} \, dx= \int_{\Omega^+} \mathcal{C}(v)\cdot 
 \operatorname{curl} _\eta \phi \  \det \nabla \eta \ dx \ \ \forall  \phi\in H^1(\Omega) \,, \label{ncs2}
\end{equation}
where the operator $ \operatorname{curl} _\eta$ is defined (\ref{curleta}).
We define the following Eulerian quantities associated to our Lagrangian velocity $v$ and test function $\phi$:
$$u = v \circ \eta ^{-1}  \,, \ \  \mathfrak{C} = \mathcal{C} (v) \circ \eta ^{-1} \,, \ \ \text{ and } \ \ w = \phi \circ \eta^{-1}  \,.$$
The
change-of-variables theorem shows that (\ref{ncs2}) can be written as\footnote{Note that $\eta(\Omega,t)$ is the image of the $2\ell$-periodic box, and hence functions defined on $\eta(\Omega,t)$ are periodic.}
\begin{equation}
\label{why1}
\int_{\eta(\Omega^+,t)} \nabla u^+ \cdot\nabla w \ dy + \int_{\eta(\Omega^-,t)} \nabla u^- \cdot\nabla w \ dy  =  \int_{\eta(\Omega^+,t)}  \mathfrak{C} \cdot\operatorname{curl} w \  dy\,.
\end{equation}

Our goal is to show that $ \operatorname{div} u =0$ and that $ \operatorname{curl} u = \mathfrak{C} $.   To do so, we use 
 integration-by-parts on the left-hand side of (\ref{why1}); we see that
\begin{align*} 
 \int_{\eta(\Omega,t)} \nabla u \cdot\nabla w \ dy & =  - \int_{\eta(\Omega,t)}  \Delta  u \cdot w \ dy  + \int_{\eta(\Gamma,t)}  \jump{ \nabla _n u} \cdot w \, dS(t) \\
 & =   \int_{\eta(\Omega,t)}   \operatorname{curl} \operatorname{curl}   u \cdot w \ dy 
 -  \int_{\eta(\Omega,t)}  \nabla \operatorname{div}   u \cdot w \ dy  + \int_{\eta(\Gamma,t)}  \jump{ \nabla _n u} \cdot w \, dS(t) \\
 & =   \int_{\eta(\Omega,t)}   \operatorname{curl}   u \cdot\operatorname{curl} w \ dy 
 +  \int_{\eta(\Omega,t)}  \operatorname{div}   u \cdot \operatorname{div} w \ dy   \\
 & \qquad \qquad 
 + \int_{\eta(\Gamma,t)}  \left( \jump{\nabla _n u}  + \jump{n \times \operatorname{curl} u} -  \jump{ \operatorname{div} u}\, n \cdot\right)   \cdot w \, dS(t) \,.
\end{align*} 
The identities (\ref{cs1.10.12}) and (\ref{cs2.10.11}) show that for $u \in H^1(\eta(\Omega,t))$, so that $\jump { u}=0$ on $\eta(\Gamma,t)$, we have
that
$$
 \jump{\nabla _n u}  + \jump{n \times \operatorname{curl} u} -  \jump{ \operatorname{div} u}\, n =0  \ \text{ on } \ \eta(\Gamma,t)\,,
$$
so that
\begin{equation}
\label{why2}
\int_{\eta(\Omega,t)} \nabla u\cdot\nabla w\ dy =  \int_{\eta(\Omega,t)}[ \operatorname{curl} u \cdot \operatorname{curl} w+ \operatorname{div} u\  \operatorname{div} w] \ dy \,.
\end{equation}
Comparing (\ref{why1}) and (\ref{why2}), we have that for all test function $w \in H^1(\eta(\Omega,t) )$, 
\begin{equation}\label{why100}
\int_{\eta(\Omega,t)}[ \operatorname{curl} u \cdot \operatorname{curl} w+ \operatorname{div} u\  \operatorname{div} w] \ dy =  \int_{\eta(\Omega,t)}  {\bf 1}_{\eta(\Omega^+,t)}\mathfrak{C} \cdot\operatorname{curl} w \  dy \,.
\end{equation} 

We now chose the test function $w$ to have the potential form
\begin{equation*}
w=\nabla\psi\,,
\end{equation*}
for some function periodic function $\psi \in H^2(\eta(\Omega,t))$. Then,
\begin{equation}
\nonumber
\operatorname{curl} w=0\,,
\end{equation}
 and (\ref{why100}) reduces to
\begin{equation}
\label{why4}
 \int_{\eta(\Omega,t)} \operatorname{div} u\  \Delta\psi\ dx =0 \,.
\end{equation}
Since $u\in H^1(\eta(\Omega,t))$ and is periodic, there exists a periodic function $\psi_0\in H^2(\eta(\Omega,t))$,  such that 
\begin{equation}
\operatorname{div} u=\Delta \psi_0 \text{ in } \eta(\Omega,t)\,.
\end{equation}
Letting $\psi = \psi_0$   in (\ref{why4}) then shows that
\begin{equation*}
0=  \int_{\eta(\Omega,t)} (\operatorname{div} u)^2\ dx\,,
\end{equation*}
and thus
\begin{equation}
\label{why5}
\operatorname{div} u=0\,.
\end{equation}
This being true for all time $t\in [0,T]$, since $\eta(x,0)=x$,  we then infer that
\begin{equation}
\label{why6}
\text{det}\nabla\eta=1\,.
\end{equation}
Using (\ref{why6}) and (\ref{why5}) in (\ref{why100}), we see that for all $w \in H^1(\eta(\Omega,t))$,
\begin{equation}
\label{why7}
0=  \int_{\eta(\Omega,t)} \left(  \mathfrak{C} -\operatorname{curl} u\right)\cdot \operatorname{curl} w \ dy \,,
\end{equation}
and 
from (\ref{fp8}) we see that $ \mathcal{C} (v) (x,t) =0$ for all $x \in \Omega^-$, since $\omega_0^-=0$.   Next, we note that
\begin{align*} 
\partial_t \mathcal{C} (v)   &= \frac{\p v}{\p x_k} \omega_0^k 
 = \frac{\p v}{ \p x_r} A^r_j \frac{ \p \eta^j}{ \p x_k} \omega_0^k  = \frac{\p v}{ \p x_r} A^r_j  \mathcal{C} (v)^j  = \nabla u(\eta) \cdot \mathcal{C} (v) \,,
\end{align*} 
where $ \nabla u( \eta ) $ denotes $ \nabla u \circ \eta$.   Hence, since $ \mathcal{C} (v) = \mathfrak{C}\circ \eta$, it follows that $ \mathfrak{C} $
satisfies 
\begin{equation}
\label{why10.2}
\mathfrak{C}_t+ \nabla _u \mathfrak{C} -\nabla u\cdot  \mathfrak{C} = 0 \ \text{ in } \ \eta(\Omega^+,t) \,,
\end{equation}
and $ \mathfrak{C} (y,t) =0$ for all $y \in  \eta(\Omega^-,t)$.
Since $\mathfrak{C} \in H^k(\eta(\Omega^+,t))$, $k \ge 2$, we take the divergence of equation (\ref{why10.2}) and find that
\begin{equation}
\operatorname{div} \mathfrak{C}_t+ \nabla _u\operatorname{div}\mathfrak{C}- \nabla _{\mathfrak{C}}\operatorname{div} u + (u^i,_j \mathfrak{C}^j,_i-u^j,_i\ \mathfrak{C}^i,_j)=0\,
\end{equation}
From (\ref{why5}) and  the symmetry of the last two terms, we conclude that
\begin{equation*}
\operatorname{div} \mathfrak{C}_t+ \nabla _u \operatorname{div}\mathfrak{C}=0\,,
\end{equation*}
and thus
\begin{equation}
\label{why11}
\operatorname{div} \mathfrak{C}(\eta(x,t),t)=\operatorname{div} \mathfrak{C}(x,0)\,.
\end{equation}
Since $\mathfrak{C}(0)=\operatorname{curl}u_0$ we then have from (\ref{why11}) that
\begin{equation}
\label{why12}
\operatorname{div} \mathfrak{C}(\eta(x,t),t)=0\,.
\end{equation}

From (\ref{transport2}) and (\ref{why6})
$$
\int_{\eta(\Omega^+,t)} \mathfrak{C} (y,t) \ dy =0\,
$$
We note that $  \mathfrak{C} ( \cdot ,t) \in L^2(\eta(\Omega,t))$. Next, we
 define the periodic vector-field $\psi \in H^2(\eta(\Omega,t))$ as the solution, modulo constants, of
\begin{alignat*}{2}
-\Delta\psi^+=& \mathfrak{C}  && \ \text{ in }\ \eta(\Omega^+,t)\,,\\
-\Delta\psi^-=&0 &&\  \text{ in }\ \eta(\Omega^-,t)\,,
\end{alignat*}
with the continuity conditions, which follow from the fact that $ \nabla \psi \in H^1(\eta(\Omega,t))$, 
\begin{equation}\label{new_bcs}
\jump {\psi} =0 \ \text{ and } \jump { \nabla _n \Psi} =0 \ \text{ on } \eta(\Gamma,t) \,.
\end{equation} 
Theorem \ref{thm:vector-valued_elliptic_eq_Sobolev_coeff} shows that
 $\psi\in H^{k+2}(\eta(\Omega^\pm,t))$, $k \ge 2$.
Moreover,  from (\ref{why11}),  $\operatorname{div}\psi$ is harmonic in  both $\eta(\Omega^+,t)$ and $\eta(\Omega^-,t)$ and is a 
periodic function;  furthermore, $\jump{\nabla\operatorname{div}\psi\cdot n}=0$ on $\Gamma(t)$, for
\begin{align}
\nabla\operatorname{div}\psi^\pm\cdot n=&\operatorname{curl}(\operatorname{curl}\psi^\pm)\cdot n+\Delta\psi^\pm\cdot n\nonumber\\
=&\operatorname{curl}(\operatorname{curl}\psi^\pm)\cdot n+ \mathfrak{C} \cdot n\nonumber\\
=&\operatorname{curl}(\operatorname{curl}\psi^\pm)\cdot n \nonumber \\
=&\nabla_{\tau_1}(\operatorname{curl}\psi^\pm)\cdot\tau_2-\nabla_{\tau_2}(\operatorname{curl}\psi^\pm)\cdot\tau_1 \nonumber \,, 
\end{align}
where we have used $ \mathfrak{C} \cdot n=0$ on $\Gamma(t)$ in the third equality, 
so that 
\begin{equation}
\jump {\nabla\operatorname{div}\psi\cdot n}
= \jump{\nabla_{\tau_1}\operatorname{curl}\psi\cdot\tau_2
-\nabla_{\tau_2}\operatorname{curl}\psi\cdot\tau_1 } \label{normaltrace3}\,.
\end{equation}
Using (\ref{new_bcs}), 
$$
\jump {\operatorname{curl}\psi}=0 \ \text{ on } \eta(\Gamma,t) \,,
$$
so that
\begin{equation}
\nonumber
\jump{\nabla_{\tau_\alpha}\operatorname{curl}\psi}=0  \ \text{ on } \eta(\Gamma,t) \,,
\end{equation}
and from (\ref{normaltrace3}), 
\begin{equation}
\jump{\nabla\operatorname{div}\psi^\cdot n}
=0\ \text{ on } \eta(\Gamma,t) 
\label{normaltrace5}\,.
\end{equation}
We now set $\Omega^\pm(t) = \eta(\Omega^\pm,t)$ 
Using (\ref{normaltrace5}) and the fact that $\operatorname{div}\psi\in H^1(\Omega(t))\cap H^{k+1}(\Omega^\pm(t))$, $k \ge 2$, is harmonic in 
$\Omega^\pm(t)$  and is a periodic function, we find that
\begin{align*}
0=&\int_{\Omega^+(t)}\Delta\operatorname{div}\psi\ \operatorname{div}\psi\ dy+ \int_{\Omega^-(t)}\Delta\operatorname{div}\psi\ \operatorname{div}\psi\ dy\,\\
=&-\int_{\Omega^+(t)}|\nabla\operatorname{div} \psi|^2 dy
-\int_{\Omega^-(t)}|\nabla \operatorname{div} \psi|^2 dy+  \int_{\Gamma(t)} \jump {\nabla\operatorname{div}\psi^\cdot n}\operatorname{div}\psi\ dS(t) \,\\
=&-\int_{\Omega^+(t)}|\nabla \operatorname{div} \psi|^2 dy-\int_{\Omega^-(t)}|\nabla\operatorname{div} \psi|^2 dy \,
\end{align*}
which shows that $\operatorname{div}\psi ( \cdot , t) $ is a constant.

 Therefore,
\begin{equation}
\label{why13}
\Delta\psi=-\operatorname{curl}(\operatorname{curl}\psi)\,,
\end{equation}
so that $ -\operatorname{curl}(\operatorname{curl}\psi) = \mathfrak{C} $.  Substituting this into 
(\ref{why7}), we see that  for all test functions $w \in H^1(\eta(\Omega,t)) $,
\begin{equation}
\label{why14}
0=  \int_{\eta(\Omega,t)} \left( -\operatorname{curl}(\operatorname{curl}\psi) -\operatorname{curl} u\right)\cdot\operatorname{curl} w\  dy\,.
\end{equation}
Next, we set $w=-\operatorname{curl}\psi+u$ in (\ref{why14}), which satisfies the condition of being a test function, and obtain that
\begin{equation*}
0=  \int_{\eta(\Omega,t)} |\operatorname{curl} (\operatorname{curl}\psi+u)|^2\ dx\,,
\end{equation*}
and thus
\begin{equation*}
\operatorname{curl} u^+=-\operatorname{curl}(\operatorname{curl}\psi^+)= \mathfrak{C}   \text{ in } \Omega^+(t) \,,
\end{equation*}
and
\begin{equation*}
\operatorname{curl} u^-=-\operatorname{curl}(\operatorname{curl}\psi^-)= 0  \, \text{ in } \Omega^-(t) \,.
\end{equation*}
Thanks to (\ref{why10.2}), we have that
in $\Omega^+(t)$,
\begin{equation*}
\operatorname{curl} u_t+ \nabla _u \operatorname{curl}u -\nabla u \cdot \operatorname{curl} {u}=0\,,
\end{equation*}
which is the same as
\begin{equation*}
\operatorname{curl} \left( u_t + \nabla _uu\right )=0\,,
\end{equation*}
from which we infer the existence of a pressure function $p$ such that
\begin{equation*}
 u_t+ \nabla _u u +\nabla p =0\,.
\end{equation*}
Therefore,  $u$ is solution of the incompressible Euler equations  (\ref{euler}), as we have already proven that $ \operatorname{div} u=0$.

It remains only to show that $u(x,0) = u_0(x)$.  To this end,  we notice that from (\ref{fp8}),
\begin{equation*}
\mathcal{C}(v)(x, 0)=\operatorname{curl} u_0(x)\,,
\end{equation*}
and thus
\begin{equation*}
\operatorname{curl} u(\cdot , 0)=\operatorname{curl} u_0\,,
\end{equation*}
which coupled with the fact that  $\operatorname{div}u( \cdot , 0)=0=\operatorname{div}u_0$ and the periodicity of $u$, provides us with
\begin{equation*}
u(\cdot , 0)=u_0+c\,,
\end{equation*}
where $c$ is a constant vector. From (\ref{fp9.2}),  
\begin{equation*}
\int_{\Omega} u(x , 0)\ dx=0\,,
\end{equation*}
which coupled with 
\begin{equation*}
\int_{\Omega} u_0(x)\ dx=0\,,
\end{equation*}
then shows that $c=0$, so that $u(\cdot , 0)=u_0$, which completes our proof that $u$ is solution of the vortex patch problem on $[0,T]$, with
the desired regularity properties.   In particular, by (\ref{fp2}) and (\ref{2309.2}), we see that $\eta \in \mC^0([0,T]; H^{k+1}(\Omega^+))$ and hence
by the trace theorem, $\eta \in \mC^0([0,T]; H^{k+1/2}(\Gamma))$.   Since the vortex patch boundary $\Gamma(t) = \eta(\Gamma,t)$ for each
$t\in [0,T]$, we see that $\Gamma(t)$ is of Sobolev class $H^{k+1/2}$.  To explain why $\Gamma(t)$ is indeed $\eta(\Gamma,t)$,  we use
the identity $ \operatorname{curl} u \circ \eta = \nabla \eta \cdot \omega_0$, where we recall that $\omega_0 = \operatorname{curl} u_0$ and
satisfies (\ref{vp3d}).  Next, we choose a local coordinate system at a point $x \in \Gamma$, such that $n(x, 0) = e_3$ and the two tangent
vectors are $\tau_1=e_1$ and $\tau_2=e_2$.    By conditions (\ref{vp3d}g,h), we can write 
$\omega_0^+ = \sum_{ \alpha =1}^2 \omega_0^+ \cdot  e_\alpha \, e_\alpha $.  This means that 
$ \operatorname{curl} u^+(\eta(x,t),t) = \eta,_ \alpha \omega_0^+ \cdot e_ \alpha $, and as we have shown already, 
 $ \operatorname{curl} u^+(\eta(x,t),t) 
\cdot n(\eta(x,t),t) =  \omega_0^+ \cdot e_ \alpha  \eta,_ \alpha \cdot \frac{(\eta,_1 \times \eta,_2)}{|\eta,_1 \times \eta,_2|}=0$.
 Since for $\alpha =1,2$, $\eta,_ \alpha $ is a tangent vector
to $\eta(\Gamma,t)$ at the point $ \eta(x,t)$ and hence continuous, then
$$
\jump{ \operatorname{curl} u } \circ \eta = \eta,_ \alpha \jump {  \omega_0^+ \cdot e_ \alpha  }\,.
$$
This shows that the set $\Gamma(t)$, on which $ \operatorname{curl} u( \cdot  ,t)$ has a jump discontinuity, is propagated by the Lagrangian
flow map $\eta( \cdot, t)$.

   Uniqueness of solutions has been shown by Gamblin \& Saint Raymond \cite{GaSa1995}.

\section{Elliptic Regularity}\label{sec::elliptic_regularity}
\subsection{A two-phase elliptic problem}\label{sec:vector-valued_elliptic_eq}

For $k \ge 2$, 
let $\Omega^+\subseteq \Rn$ denote an open, bounded $H^{k+1} $-domain which is diffeomorphic to the unit ball $B=
\{x \in \mathbb{R}^\n  \ : \  |x| < 1\}$.  We set $\Gamma := \partial \Omega^+$, which is then an $H^{k+1/2}$-class closed surface. Let $\Omega$ denote a periodic box $[-\mathcal{L} ,\mathcal{L} ]^\n$ in $ \mathbb{R}^\n  $ with opposite sides identified, and with $\mathcal{L} $ sufficiently large so that $ \overline{\Omega^+}$ is properly contained in $\Omega$.
 Functions defined on $\Omega$ are $2 \mathcal{L} $-periodic in each of the $\n$ coordinate directions, i.e.,
$$
u(x + 2\ell e_i) = u(x) \ \ \forall x \in \mathbb{R}^\n  , i=1,...,\n \,,
$$
were $e_i$ denotes the usual Cartesian basis.
We set
 $\Omega^- =  \Omega/\overline{\Omega^+}^c$.

We establish elliptic regularity for the following two-phase vector-valued elliptic problem:
\begin{subequations}
\label{vector-valued_elliptic_eq}
\begin{alignat}{2}
 - \frac{\p}{\p x_j} \Big(a_\pm^{jk} \frac{\p \u_\pm}{\p x_k}\Big) &= \f_\pm \qquad&&\text{in}\quad\Omega^\pm\,, \\
\jump{\u} &= 0 \qquad&&\text{on}\quad\Gamma \,,\\
\Bigjump{a^{jk} \frac{\p \u}{\p x_k} N_j} &=\g  \qquad&&\text{on}\quad \Gamma\, \\
\u_- &\text{ is  periodic}  \qquad&&\text{on}\quad\partial \Omega\,
\end{alignat}
\end{subequations}
where $\u_\pm = (\u_\pm^1,\cdots,\u_\pm^\n)$ and $\f_\pm = (\f^1,\cdots,\f^\n)$, $\g = (\g^1,\cdots,\g^\n)$ are vector-valued functions, and $a_\pm^{jk}$ are
 two-tensors which  satisfy the positivity condition
\begin{equation}\label{positivity_of_a}
a_\pm^{jk} \xi_j \xi_k \ge \lambda |\xi|^2 \qquad\Forall \xi \in \Rn
\end{equation}
for some $\lambda > 0$. We use the notation $\jump{\w} = \w_+ - \w_-$ for vector fields $\w$ on $\Gamma$, and we let
$N$ denote the outward unit normal to $\partial \Omega^+$.     The system (\ref{vector-valued_elliptic_eq}) has a unique solution in $H^1(\Omega)$
when we additionally assume that $\int_\Omega u(x) dx =0$.

\def\intg{\int_{\Gamma}}
\def\into{\int_{\Omega}}
\def\intp{\int_{\Omega^+}}
\def\intm{\int_{\Omega^-}}

Let $\V = H^1( \Omega)$, the space of $H^1$ functions on $[- \mathcal{L} ,\mathcal{L} ]^\n$ which are $2\mathcal{L} $-periodic.  Let
 $\u = \u_+ {\bf 1}_{\overline{\Omega^+}} + \u_- {\bf 1}_{\Omega^-}$,
$\f = \f_+ {\bf 1}_{\overline{\Omega^+}} + \f_- {\bf 1}_{\Omega^-}$. The variational (or weak) form of
(\ref{vector-valued_elliptic_eq}) is given by
\begin{equation}\label{vector-valued_elliptic_weak_form}
 \int_{\Omega^\pm} \hspace{-1pt}a^{jk} \frac{\p \u^i}{\p x_k} \frac{\p \Varphi^i}{\p x_j}\, dx = \int_{\Omega^\pm} \f\,\Varphi\, dx 
 + \intg \g \,  \Varphi \, dS \quad\Forall \Varphi \in \V\,,
\end{equation}
where we use the following integral notation:
$$
 \int_{\Omega^\pm} \hspace{-1pt}a^{jk} \frac{\p \u^i}{\p x_k} \frac{\p \Varphi^i}{\p x_j}\, dx =
  \int_{\Omega^+} \hspace{-1pt}a_+^{jk} \frac{\p \u_+^i}{\p x_k} \frac{\p \Varphi^i}{\p x_j}\, dx
+  \int_{\Omega^-} \hspace{-1pt}a_-^{jk} \frac{\p \u_-^i}{\p x_k} \frac{\p \Varphi^i}{\p x_j}\, dx \,
$$
and 
$$
\int_{\Omega^\pm} \f\,\Varphi\, dx = \int_{\Omega^+} \f_+\,\Varphi\, dx + \int_{\Omega^-} \f_-\,\Varphi\, dx \,.
$$

 The regularity theory for solutions $\u$ of (\ref{vector-valued_elliptic_weak_form}) is classical when the coefficient matrix $ a^{jk}$ is
 in $\mC^k$, and can be summarized by the following
 \begin{theorem}\label{thm:vector-valued_elliptic_regularity}
Suppose that for some $\rk\in \bbN$,  $a_\pm^{jk} \in \mC^\rk(\cls{\Omega^\pm})$ satisfies {\rm(\ref{positivity_of_a})}. Then for all $\f_\pm\in H^{\rk-1}(\Omega^\pm)$ and $\g \in H^{\rk-0.5}(\Gamma)$, the solution $\u$ to {\rm(\ref{vector-valued_elliptic_eq})} is in $H^{\rk+1}(\Omega^\pm)$, and satisfies
\begin{equation}\label{vector-valued_elliptic_regularity}
\|\u_\pm\|_{H^{\rk+1}(\Omega^\pm)}   \le C \Big[\|\f_\pm\|_{H^{\rk-1}(\Omega^\pm)}
 + \|\g\|_{H^{\rk-0.5}(\Gamma)} \Big]
\end{equation}
for some constant $C$ depending on $\|a_\pm\|_{\mC^\rk(\Omega^\pm)}$.
\end{theorem}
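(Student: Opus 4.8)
The plan is to establish (\ref{vector-valued_elliptic_regularity}) by the classical route of \emph{localization}, \emph{flattening of the interface}, \emph{tangential difference quotients}, and \emph{recovery of the normal derivatives from the equation}, organized as an induction on $\rk$. Existence and uniqueness of $\u \in \V = H^1(\Omega)$ is already granted by Lax--Milgram together with the normalization $\int_\Omega \u\, dx = 0$, so the task is purely to upgrade regularity while tracking the dependence of the constant on $\|a_\pm\|_{\mC^\rk(\Omega^\pm)}$. I would first note that the vector system decouples: each component $\u^i$ solves the scalar transmission problem $-\partial_j(a^{jk}\u^i,_k) = \f^i$ in $\Omega^\pm$ with $\jump{\u^i}=0$ and $\Bigjump{a^{jk}\u^i,_k N_j} = \g^i$ on $\Gamma$, the components being coupled only through the shared coefficient matrix $a^{jk}$. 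Hence it suffices to prove the scalar estimate, and (\ref{positivity_of_a}) is exactly scalar ellipticity.

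Away from $\Gamma$ the weak form (\ref{vector-valued_elliptic_weak_form}) reduces on any $\omega \cptsubset \Omega^\pm$ to a single-phase divergence-form equation with $\mC^\rk$ coefficients obeying (\ref{positivity_of_a}), so standard interior elliptic regularity gives $\u \in H^{\rk+1}_\loc(\Omega^\pm)$ together with the interior part of the estimate. It remains to treat a neighborhood of $\Gamma$, which I would cover by a finite partition of unity $\{\zeta_i\}$ subordinate to charts in which $\Gamma$ is flattened onto a piece of the hyperplane $\{x_\n = 0\}$. Taking these flattening diffeomorphisms smooth enough to preserve the $\mC^\rk$ regularity of the data, the change of variables produces, on each chart, a transmission problem across $\{x_\n=0\}$ with new coefficients $\tilde a_\pm^{jk} \in \mC^\rk$ still satisfying (\ref{positivity_of_a}) with a possibly smaller $\lambda$, and with $\jump{\u}=0$ and the conormal jump $\g$ preserved in the transformed weak form.

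The heart of the argument is the gain of one \emph{tangential} derivative by difference quotients, which I would then iterate. For a direction $s\in\{1,\dots,\n-1\}$ parallel to $\{x_\n=0\}$, let $D_h^s f = h^{-1}(f(\cdot + h e_s) - f)$ and test the transformed weak form with $\Varphi = -D_h^{-s}(\zeta^2\, D_h^s \u)$. The decisive point is that a purely tangential difference quotient of the globally-$H^1$ function $\u$ is admissible: it preserves periodicity and, because the shift is \emph{along} the flat interface, it preserves $\jump{\u}=0$, so the only interface contribution is the controllable term coming from $\g$. Coercivity from (\ref{positivity_of_a}), Young's inequality to absorb the commutator of $D_h^s$ with $\tilde a$ and the cutoff terms, and the trace--duality bound $|\intg \g\,\Varphi|\le \|\g\|_{H^{\rk-0.5}(\Gamma)}\|\Varphi\|_{H^{-\rk+0.5}(\Gamma)}$ yield a bound on $\zeta\, D_h^s\nabla\u$ uniform in $h$; letting $h\to 0$ supplies one tangential derivative. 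With all tangential derivatives in hand, the top normal derivative is recovered algebraically: since $\tilde a^{\n\n}_\pm \ge \lambda > 0$, the equation gives
\[
\u_\pm,_{\n\n} = (\tilde a^{\n\n}_\pm)^{-1}\Big[\,\f_\pm + \text{terms with at most one factor } \partial_\n\,\Big]\,,
\]
and differentiating this relation tangentially and in $x_\n$, using the already-controlled tangential derivatives, bootstraps the count of normal derivatives up to total order $\rk+1$ on each side. Running this as an induction on $\rk$ (the inductive hypothesis controlling the lower-order terms produced when one further derivative is taken, with the $H^2$ case $\rk=1$ as the base), then summing the local estimates over $i$ and adding the interior estimate, produces (\ref{vector-valued_elliptic_regularity}).

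The step I expect to be the main obstacle is the careful handling of the interface: one must verify that the tangential difference-quotient test function genuinely respects both transmission conditions, so that no uncontrolled boundary term is generated beyond the $\g$-contribution, and that flattening neither destroys the $\mC^\rk$ regularity of the coefficients nor the positivity (\ref{positivity_of_a}). The remaining ingredients --- interior regularity, commutator estimates for $D_h^s$, trace duality, and the algebraic normal-derivative recovery --- are entirely standard, which is why this result is classical; the genuine novelty, namely the case of merely Sobolev-class coefficients, is reserved for Theorem \ref{thm:vector-valued_elliptic_eq_Sobolev_coeff}.
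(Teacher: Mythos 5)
The paper does not actually prove this theorem: it is stated as a classical fact and invoked only to guarantee qualitative smoothness of the regularized solutions $\u^\epsilon$ in the proof of Theorem \ref{thm:vector-valued_elliptic_eq_Sobolev_coeff}. Your outline is the standard argument for it, and it coincides in structure with the scheme the paper does carry out in detail for the Sobolev-coefficient case: localization by a partition of unity, flattening of $\Gamma$, a gain of tangential regularity (you use difference quotients $D_h^s$ where the paper uses horizontal convolution together with the test function $(-1)^\ell[\xi\,\Delta_0^\ell(\xi U)]$ --- these are interchangeable devices), and recovery of the normal derivatives from the equation via $b^{\n\n}\ge \lambda/2$, iterated as in (\ref{pf_of_thm12.12_id2})--(\ref{pf_of_thm12.12_id3}). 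Your observations that the system decouples componentwise and that a tangential shift preserves both transmission conditions across the flattened interface are correct and are exactly the points that make the argument work. One caveat worth recording: since $\Gamma$ is only assumed $H^{\rk+1/2}$, the flattening charts are only $H^{\rk+1}$-diffeomorphisms, so the transformed coefficients $b^{rs}=(a^{jk}\circ\theta)A^s_kA^r_j$ are not literally $\mC^\rk$; either one states Theorem \ref{thm:vector-valued_elliptic_regularity} for a smooth reference boundary (which suffices for the way the paper uses it, namely with mollified coefficients and only to obtain qualitative regularity), or one controls the coefficient products with the Sobolev estimates of Lemma \ref{prop:HkHl_product}, at which point one is effectively already in the setting of Theorem \ref{thm:vector-valued_elliptic_eq_Sobolev_coeff}.
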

We use the following notation for norms:
$$\|( \cdot ) _\pm\|_{H^{\rk+1}(\Omega^\pm)} = \|(\cdot )_+\|_{H^{\rk+1}(\Omega^+)} 
+ \|( \cdot )_-\|_{H^{\rk+1}(\Omega^-)} \,.
$$

We shall need the corresponding result for the case that the coefficient matrix $a_\pm^{jk}$ has only Sobolev-class regularity:

\begin{theorem}\label{thm:vector-valued_elliptic_eq_Sobolev_coeff}
 Suppose that for some integer $\rk > \novertwo$ and $1\le \ell \le \rk$\,, $a_\pm^{jk} \in H^\rk(\Omega^\pm)$ satisfies {\rm(\ref{positivity_of_a})}.
 Then if $f\in H^{\ell-1}(\Omega^+)$ and $g \in H^{\ell-0.5}(\Gamma)$, the weak solution $\u_\pm$ to {\rm(\ref{vector-valued_elliptic_eq})} is in
  $H^{\ell+1}(\Omega^\pm)$, and satisfies
\begin{align} 
& \|\u_\pm\|_{H^{\ell+1}(\Omega^\pm)}   \nonumber \\
& \qquad
 \le C \Big[\|\f_\pm\|_{H^{\ell-1}(\Omega^\pm)}+ \|\g\|_{H^{\ell-0.5}(\Gamma)}  
+ \P\big(\|a_\pm\|_{H^\rk(\Omega^\pm)}\big) \Big(\|\f_\pm\|_{L^2(\Omega^\pm)} + \|\g\|_{H^{-0.5}(\Gamma)}\Big) \Big]\,,
\label{vector-valued_elliptic_regularity_sobolev1}
\end{align} 
where $\P$ is a polynomial function and the constant 
 $C$ depends on $\Omega^\pm$. 
\end{theorem}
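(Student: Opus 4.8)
The plan is to prove the estimate by an induction that gains one derivative at a time, modelled on the smooth-coefficient result Theorem~\ref{thm:vector-valued_elliptic_regularity}, but organized so that the coefficient norm $\|a_\pm\|_{H^\rk(\Omega^\pm)}$ multiplies only the lower-order data norms $\|\f_\pm\|_{L^2(\Omega^\pm)}+\|\g\|_{H^{-0.5}(\Gamma)}$, while the top-order contribution stays linear in $\|\f_\pm\|_{H^{\ell-1}(\Omega^\pm)}+\|\g\|_{H^{\ell-0.5}(\Gamma)}$ with a constant depending only on $\Omega^\pm$ and the ellipticity constant $\lambda$ in (\ref{positivity_of_a}). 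The base case is the Lax--Milgram energy estimate for the weak form (\ref{vector-valued_elliptic_weak_form}): coercivity from (\ref{positivity_of_a}) yields $\|\u\|_{H^1(\Omega)}\le C(\|\f\|_{L^2}+\|\g\|_{H^{-0.5}(\Gamma)})$, which seeds the lower-order terms of the induction. The one structural tool that keeps the coefficient dependence polynomial and confined to lower order is that, since $\rk>\novertwo$, the space $H^\rk(\Omega^\pm)$ is a multiplicative algebra and satisfies the tame product estimate $\|a\,b\|_{H^s(\Omega^\pm)}\le C\|a\|_{H^\rk(\Omega^\pm)}\|b\|_{H^s(\Omega^\pm)}$ for $0\le s\le\rk$.

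First I would localize with a partition of unity subordinate to a finite cover of $\cls{\Omega^+}$ and flatten $\Gamma$ exactly as in Section~\ref{sec::flatten}, so that near the interface the problem becomes a two-phase problem across the flat hyperplane $\{x_\n=0\}$; the transformed coefficients remain in $H^\rk$ and, by an argument identical to the one producing (\ref{Abound}), retain the positivity (\ref{positivity_of_a}) with a comparable constant. Interior estimates away from $\Gamma$ are classical, so the work concentrates near the flattened interface. At the step that raises the regularity to $H^{m}$, I would gain tangential regularity by inserting into the weak form the test function $\xi^2\Lambda_\epsilon^2\p_T^{2m}(\xi^2\u)$ built from a horizontal convolution operator $\Lambda_\epsilon$ and tangential derivatives $\p_T$, precisely as in Steps~2 and~5 of Section~\ref{sec4}: the coercive term produced by (\ref{positivity_of_a}) controls $\|\Lambda_\epsilon\p_T^{m}\nabla(\xi^2\u)\|_{L^2}^2$, and all the remaining terms are absorbed after a Cauchy--Young splitting. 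The commutator $\comm{\Lambda_\epsilon}{a}$ is treated by the Morrey and Young-convolution argument of Section~\ref{sec4}, now carrying $\|a\|_{H^\rk}$ in place of a pointwise gradient bound, and letting $\epsilon\to0$ delivers control of all derivatives of $\u$ carrying at most one normal derivative.

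The missing pure normal derivatives are then recovered algebraically from the flattened equation: since $a^{\n\n}_\pm\ge\lambda>0$, one solves for $\p_\n^2\u_\pm$ in terms of the already-estimated tangential and mixed derivatives, the lower-order products $(\p a)(\nabla\u)$, and $\f$, and iterating this relation recovers the full $H^{m}$ norm on each side. The interface enters through the flux jump (\ref{vector-valued_elliptic_eq}c), which in the weak form (\ref{vector-valued_elliptic_weak_form}) produces a boundary integral on $\{x_\n=0\}$; I would estimate it by the trace theorem together with the $H^{-0.5}$--$H^{m-0.5}$ duality on $\Gamma$, exactly as the term $\mathcal{I}_\p$ was handled in Step~4 of Section~\ref{sec4}. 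At each stage every product of a coefficient derivative with a derivative of $\u$ is estimated by the tame multiplication inequality, so the factor carrying $\|a_\pm\|_{H^\rk}$ always multiplies a strictly lower-order norm of $\u$, which by the inductive hypothesis is bounded by $\P(\|a_\pm\|_{H^\rk})(\|\f\|_{L^2}+\|\g\|_{H^{-0.5}})$; this is what produces the precise form of (\ref{vector-valued_elliptic_regularity_sobolev1}).

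The main obstacle is exactly this bookkeeping of the \emph{tame}, Moser-type structure, and it is most delicate at the critical index $\ell=\rk$, where $a_\pm$ and $\u$ sit at comparable Sobolev regularity and the borderline product $(\p a)(\p^{\,\rk}\u)$ must still be placed in $L^2$. There I would split each product by the Leibniz rule and assign the top derivative to the factor admitting a linear $H^{\ell-1}$ (respectively $H^{\ell-0.5}$) bound, putting the complementary factor into $L^\infty$ via the embedding $H^\rk\hookrightarrow L^\infty$ (valid because $\rk>\novertwo$) or into the algebra $H^\rk$; the commutator estimates for $\Lambda_\epsilon$ must be arranged in the same way, so that no top-order derivative of $\u$ is ever paired at top order with $\|a_\pm\|_{H^\rk}$. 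Verifying that this splitting closes at $\ell=\rk$ is the crux of the argument, and it is precisely what forces $\P(\|a_\pm\|_{H^\rk})$ to appear only in front of the lower-order data norms in (\ref{vector-valued_elliptic_regularity_sobolev1}).
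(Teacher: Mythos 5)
Your overall architecture --- localize and flatten $\Gamma$, gain tangential regularity from test functions built on horizontal derivatives, recover normal derivatives algebraically from the equation using $a^{\n\n}\ge\lambda$, and control the coefficient terms by commutator and tame-product estimates --- matches the paper's proof of Theorem \ref{thm:vector-valued_elliptic_eq_Sobolev_coeff} closely. (One secondary difference: the paper first mollifies $a_\pm$, $\f$, $\g$ so that Theorem \ref{thm:vector-valued_elliptic_regularity} supplies a smooth approximate solution $\u^\epsilon$, runs the tangential estimate with the test function $(-1)^\ell\xi\Delta_0^\ell(\xi U)$ directly, and passes to the limit; you instead place the horizontal mollifier $\Lambda_\epsilon$ inside the test function as in Section \ref{sec4}. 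That choice is workable for the tangential step, but it leaves the pointwise recovery of $\p_\n^2\u$ from the equation less justified for a merely weak solution.)

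The genuine gap is in your closing mechanism. You propose an induction on the order in which $\|a_\pm\|_{H^\rk(\Omega^\pm)}$ always multiplies a \emph{strictly lower-order} norm of $\u$ that is controlled by the inductive hypothesis. This cannot be arranged. The worst commutator term is of the form $(\bp b)\,(\bp^{\ell-1}\nabla(\xi U))$: when $\novertwo<\rk\le\novertwo+1$ (e.g.\ $\n=3$, $\rk=2$), $\nabla a\in H^{\rk-1}(\Omega^\pm)$ is \emph{not} in $L^\infty$, so placing this product in $L^2$ forces $\nabla^{\ell}(\xi U)$ into some $L^p$ with $p>2$, i.e.\ costs an intermediate norm $\|\u\|_{H^{\ell+\sigma}(\Omega^\pm)}$ with $\sigma>0$ --- strictly above the inductive level $H^{\ell}$. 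This is exactly what Lemma \ref{prop:HkHl_product} and Corollary \ref{lem:useful_lemma_with_Sobolev_class_coeff1} deliver: the commutator is bounded by $\|a\|_{H^\rk}\|\u^\epsilon\|_{H^{\ell+\frac{7}{8}}}$, not by $\|a\|_{H^\rk}\|\u^\epsilon\|_{H^{\ell}}$, so the induction does not close (and the $H^\rk\hookrightarrow L^\infty$ embedding you invoke applies to $a$ but not to $\nabla a$). The paper closes the argument not by induction but by the interpolation $\|\u_\pm^\epsilon\|_{H^{\ell+\frac{7}{8}}}\le C\|\u_\pm^\epsilon\|_{H^{\ell+1}}^{1-\frac{1}{8\ell}}\|\u_\pm^\epsilon\|_{H^1}^{\frac{1}{8\ell}}$ followed by Young's inequality, which absorbs the top power into the left-hand side and leaves $\P\big(\|a_\pm\|_{H^\rk}\big)\|\u_\pm^\epsilon\|_{H^1}$; the Lax--Milgram energy bound then converts this into the factor $\P\big(\|a_\pm\|_{H^\rk}\big)\big(\|\f_\pm\|_{L^2}+\|\g\|_{H^{-0.5}(\Gamma)}\big)$ appearing in (\ref{vector-valued_elliptic_regularity_sobolev1}). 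This interpolation-and-absorption step is the missing idea in your proposal.
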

We are using the notation
\begin{align*} 
&
 \P\big(\|a_\pm\|_{H^\rk(\Omega^\pm)}\big) \Big(\|\f_\pm\|_{L^2(\Omega^\pm)} + \|\g\|_{H^{-0.5}(\Gamma)}\Big) \\
 &\
 =
  \P\big(\|a_+\|_{H^\rk(\Omega^+)}\big) \Big( \|\f_+\|_{L^2(\Omega^+)} + \|\g\|_{H^{-0.5}(\Gamma)}\Big)
  +
  \P\big(\|a_-\|_{H^\rk(\Omega^-)}\big) \Big( \|\f_-\|_{L^2(\Omega^-)} + \|\g\|_{H^{-0.5}(\Gamma)}\Big).
\end{align*} 
\begin{proof} 
Let $\rE^\pm: H^{\rk+1}(\Omega^\pm) \to H^{\rk+1}(\Rn)$ denote a Sobolev extension operator, and let ${a_\pm}_\epsilon 
= \eta_\epsilon \cvl 
(\rE^\pm a_\pm)$ and $\f_{\hspace{-1.5pt}\epsilon} = \eta_\epsilon \cvl (\rE^\pm f)$.
 Let $\{ \U_m\}_{m=1}^K$ denote an open cover of $\Omega$ which intersects the interface $\Gamma$, and let $\{ \theta_m\}_{m=1}^K$ denote a collection of charts such that
\begin{enumerate}
\item $\theta_m: B(0, r_m) \to \U_m \text{ is an } H^{k+1}\text{-diffeomorphism} $,
\item  $\det (\nabla \theta_m) >0 $,
\item $\theta_m: B^0_m \equiv B(0,r_m) \cap \{ x_n=0\} \to \U_m \cap \Gamma $,
\item $\theta_m: B^+_m \equiv B(0,r_m) \cap \{y_\n>0\} \to \U_m \cap \Omega^+ $,
\item $\theta_m: B^-_m \equiv B(0,r_m) \cap \{y_\n<0\} \to \U_m \cap \Omega^- $,
\item $\|\nabla \theta_m - \id\|_{L^\infty(B(0,r_m))} \ll 1$.
\end{enumerate}
Let $0 \le \zeta_m \le 1$ in $\mC^\infty_\cptspt(\U_m)$ denote a partition of unity subordinate to the open covering $\U_m$; that is,
$$
\sum\limits_{m=0}^K \zeta_m = 1 \quad\text{and}\quad \supp(\zeta_m) \subseteq \U_m \quad\Forall m\,.
$$
Finally, let $\g_\epsilon$ denote a smooth regularization of $\g$ defined by
\begin{align*}
\g_\epsilon &= \sum_{m=1}^K \sqrt{\zeta_m}\, \big[\Lambda_\epsilon \big((\sqrt{\zeta_m}\, \g)\circ\theta_m\big)\big]\circ \theta_m^{-1} \,.
\end{align*}
It follows that for  $\epsilon \ll 1$ sufficiently small, 
\begin{equation}\label{uniform_ellipticity_of_tilde_a}
{a_\pm^{jk}}_\epsilon(x) \xi_j \xi_k \ge \frac{\lambda}{2}\, |\xi|^2 \qquad\Forall\xi \in \Rn, x\in \Omega\,.
\end{equation}
Hence, by Theorem \ref{thm:vector-valued_elliptic_regularity}, the solution $\u^\epsilon $ to the variational problem
\begin{equation}\nonumber
 \int_{\Omega^\pm} \hspace{-1pt}a_ \epsilon ^{jk} \frac{\p {\u^ \epsilon } ^i}{\p x_k} \frac{\p \Varphi^i}{\p x_j}\, dx = \intp \f_ \epsilon\, \Varphi\, dx 
 + \intg \g_\epsilon  \,  \Varphi \, dS \quad\Forall \Varphi \in \V\,,
\end{equation}
satisfies $\u^\epsilon_\pm \in H^k(\Omega^\pm)$ for all $k \ge  1$; in particular, the vector fields $\u^\epsilon_\pm$ are smooth.
 We next establish an $\epsilon$-independent upper bound for 
$\|\u^\epsilon_\pm\|_{H^{\ell+1}(\Omega^\pm)}$.

\vspace{.1 in}

\noindent 
{\bf Step 1: Regularity in horizontal directions near $\Gamma$.}    
We fix $m \in \{1, ..., K\}$ and set  
$$U_\pm = \u_\pm^\epsilon \circ \theta_m \,, \ F= \f_ \epsilon  \circ \theta _m\,, \ G=\g_ \epsilon  \circ \theta _m \,, \ \xi = \zeta_m \circ \theta_m\,, 
\text{ and }  \Phi = \Varphi \circ \theta _m \,.
$$
With $A = [\nabla \theta_m]^{-1}$, we define $b^{rs} = (a^{jk}\circ\theta_m) A^s_k A^r_j$. Then, since
 $\|\nabla \theta_m - \id\|_{L^\infty(B^+_m)} \ll 1$, the matrix $b$ is positive-definite:
\begin{equation}\label{brs_elliptic}
b^{rs} \xi_r \xi_s = (a^{jk}\circ\theta_m) A^s_k A^r_j \xi_r \xi_s \ge \lambda |A^\rT \xi|^2 \ge \frac{\lambda}{4}
 |\xi|^2\ \ \forall \xi \in \mathbb{R}  ^\n\,.
\end{equation}
By the change-of-variables formula, the variational formulation is written as
\begin{equation}\nonumber
 \int_{B_m^\pm}  b^{rs} \frac{\p U^i}{\p x_r} \frac{\p \Phi^i}{\p x_s}\, dx 
 = \int_{B_m^\pm} F\, \Phi\, dx 
 + \int_{B_m^0} G  \,  \Phi \, dS \quad\Forall \Phi \in H^1_0(B_m)\,,
\end{equation}
where $\int_{B_m^\pm}  b^{rs} \frac{\p U^i}{\p x_r} \frac{\p \Phi^i}{\p x_s}\, dx =\int_{B_m^+}  b_+^{rs} \frac{\p U_+^i}{\p x_r} \frac{\p \Phi^i}{\p x_s}\, dx +\int_{B_m^-}  b_-^{rs} \frac{\p U_-^i}{\p x_r} \frac{\p \Phi^i}{\p x_s}\, dx$.

With  $ \Delta_0 = \sum_{ \alpha =1}^{\n-1}  \frac{\p^2}{ \p x_{ \alpha } ^2}$ denoting the horizontal Laplace operator,   we define the
test function
$$
\Phi= (-1)^\ell \big[ \xi  \Delta _0^{\ell} ( \xi U)\big] \,,$$
 so that
\begin{equation}\label{pf_of_thm12.12_id1}
 \int_{B_m^\pm}  b^{rs} \frac{\p U^i}{\p x_r} \frac{\p \Phi^i}{\p x_s}\, dx 
 \le C \Big[\|\f_+\|_{H^{\ell-1}(\Omega^+)}+ \|\f_-\|_{H^{\ell-1}(\Omega^-)} + \|\g\|_{H^{\ell-0.5}(\Gamma)} \Big] \big\|\bp^\ell ( \xi U_\pm)\big\|_{H^1(B^\pm_m)} \,.
\end{equation}

We focus now on the left-hand side of (\ref{pf_of_thm12.12_id1}).  We let $\bp =( \p_1, \cdot \cdot\cdot , \p_{n-1})$ denote the horizontal gradient, 
and write
\begin{align*} 
 \bp ^\ell  V \, \bp ^\ell  W & =\sum_{\alpha _1=1}^{\n-1} \cdot\cdot\cdot \sum_{\alpha _{\ell}=1}^{\n-1}  \frac{\p^\ell V}{\p x_{ \alpha _1}\cdot\cdot\cdot 
 \p x_{ \alpha _\ell} } \  \frac{\p^\ell W}{\p x_{ \alpha _1}\cdot\cdot\cdot 
 \p x_{ \alpha _\ell} } \,, \\
  \bp ^{\ell-1}  V \, \bp ^{\ell+1}  W & =\sum_{\alpha _1=1}^{\n-1} \cdot\cdot\cdot \sum_{\alpha _{\ell-1}=1}^{\n-1}  \frac{\p^\ell V}{\p x_{ \alpha _1}\cdot\cdot\cdot 
 \p x_{ \alpha _{\ell-1}} } \  \frac{\p^\ell  \Delta _0 W}{\p x_{ \alpha _1}\cdot\cdot\cdot 
 \p x_{ \alpha _{\ell-1}} } \,, \\
\end{align*} 
and so forth.
Then,
\begin{align}
 \int_{B_m^\pm}  b^{rs} \frac{\p U^i}{\p x_r} \frac{\p \Phi^i}{\p x_s}\, dx 
 &= \int_{B^\pm_m} \bp^\ell \big[b^{rs} (\xi U),_r\big] \bp^\ell (\xi U ),_s dx
- \int_{B^\pm_m} \bp^\ell \big[b^{rs}U \xi ,_r\big] \bp^\ell (\xi U),_s\, dx \nonumber\\
&+ \int_{B^\pm_m} \bp^{\ell-1} \big[b^{rs}U,_r \xi ,_s\big] \bp^{\ell+1} (\xi U) dx \,.  \label{vector-valued_elliptic_proof_id1}
\end{align}
For the first term on the right-hand side of (\ref{vector-valued_elliptic_proof_id1}), we make use of  (\ref{brs_elliptic}) and
Young's inequality to conclude that
\begin{align*}
\int_{B^\pm_m} &\bp^\ell \big[b^{rs} (\xi U ),_r\big] \bp^\ell (\xi U ),_s dx 
 = \int_{B^\pm_m} b^{rs} \bp^\ell (\xi U),_r \bp^\ell (\xi U),_s dx 
+ \int_{B^\pm_m} \big[\comm{\bp^\ell}{b^{rs}}(\xi U),_r\big] \bp^\ell (\xi U ),_s dx \\
&\qquad \ge \big(\frac{\lambda}{8} - \delta\big) \big\|\bp^\ell \nabla (\xi U _\pm)\big\|^2_{L^2(B_m^\pm)} 
- C_\delta \big\|\comm{\bp^\ell}{b}\nabla (\xi U_\pm)\big\|^2_{L^2(B^\pm_m)}\,.
\end{align*}
Then, Corollary \ref{lem:useful_lemma_with_Sobolev_class_coeff1} with $\epsilon = 1/8$ shows that
\begin{align}
& \int_{B^\pm_m} \bp^\ell \big[b^{rs} (\xi U ),_r\big] \bp^\ell (\xi U ),_s dy 
\ge \big(\frac{\lambda}{8} - \delta\big) \big\|\bp^\ell \nabla (\xi U_\pm)\big\|^2_{L^2(B^\pm_m)} 
- C_\delta \|a_\pm\|^2_{H^\rk(\Omega_\pm)}\|\u_\pm^\epsilon\|^2_{H^{\ell+\frac{7}{8}}(\Omega_\pm)} \,. \label{ss1}
\end{align}
By Lemma \ref{prop:HkHl_product},
 for $0 \le \ell\le \rk+1$, $f_\pm\in H^{\max\{\rk,\ell\}}(\Omega_\pm)$ and $g_\pm\in H^\ell(\Omega_\pm)$,
and for a generic $C$,
\begin{equation}\label{HkHlproduct}
\begin{array}{l}
\displaystyle{}\|f_\pm \  g_\pm\|_{H^\ell(\Omega)} \le C \|f_\pm\|_{H^{\max\{\rk,\ell\}}(\Omega^\pm)} \|g_\pm\|_{H^\ell(\Omega^\pm)} \vspace{.2cm}\\
\qquad\qquad\qquad\quad \displaystyle{}\Forall f_\pm\in H^{\max\{\rk,\ell\}}(\Omega^\pm), g_\pm \in H^\ell(\Omega^\pm)\,.
\end{array}
\end{equation}
For the second and third terms on the right-hand side of (\ref{vector-valued_elliptic_proof_id1}), we 
use the inequality (\ref{HkHlproduct}), and find that
\begin{align}
&\left|  \int_{B^\pm_m} \bp^\ell \big[b^{rs}U \xi ,_r\big] \bp^\ell (\xi U),_s\, dx \right|
+ \left|\int_{B^\pm_m} \bp^{\ell-1} \big[b^{rs}U,_r \xi ,_s\big] \bp^{\ell+1} (\xi U) dx \right| \nonumber \\
&\qquad\qquad \qquad\qquad \quad \le C_\delta \|a_\pm\|^2_{H^\rk(\Omega)} \|\u_\pm^\epsilon\|^2_{H^\ell(\Omega^\pm)} 
+ \delta \big\|\bp^\ell \nabla (\xi U_\pm)\big\|^2_{L^2(B^\pm_m)}\,. \label{ss2}
\end{align}
 Choosing $\delta>0$ sufficiently small in (\ref{ss1}) and (\ref{ss2}), we conclude that
\begin{equation}\label{vector-valued_elliptic_est_temp2}
 \big\|  \xi \bp^\ell \nabla U _\pm\big\|_{L^2(B^\pm_m)}
\le C \Big[\|\f_\pm\|_{H^{\ell-1}(\Omega^\pm)} + \|\g\|_{H^{\ell-0.5}(\Gamma)} + \|a\|_{H^\rk(\Omega)} \|\u_\pm^\epsilon\|_{H^{\ell+\frac{7}{8}}(\Omega^\pm)} \Big]\,.
\end{equation}

\vspace{.1 in}
\noindent 
{\bf Step 2: Regularity in the vertical direction near $\Gamma$.} We write
(\ref{vector-valued_elliptic_eq}a) as
\begin{equation}\label{ss3}
-\xi  \big( b^{rs} {U_\pm},_s \big),_r = \xi  F_\pm \text{ in } B_m^\pm \,.
\end{equation} 
We analyze (\ref{ss3}) in the $+$-phase and drop the $+$-subscript for notational clarity.
With $U,_\n$ denoting $\p U/ \p x_\n$, we have that
\begin{equation}\label{pf_of_thm12.12_id2}
\begin{array}{l}
-\xi  b^{\n\n} U,_{\n\n} = \xi \Big[  F - b^{\n\n},_{\n} U,_{\n} - \sum_{(r,s)\ne (\n,\n)} b^{rs}, _{r} U,_s 
 - \sum_{(r,s)\ne (\n,\n)} b^{rs} U,_{sr} \Big] \text{ in } B^+_m\,.
\end{array}
\end{equation}
We analyze the terms on the right-hand side of (\ref{pf_of_thm12.12_id2}).
For any integer $j$ such that $0 \le j \le \ell -1$, 
\begin{align*}
 \big\|\bp^{\ell-1-j} \nabla^j F\big\|_{L^2(B^+_m)} 
 \le C \Big[ \|\f\|_{H^{\ell-1}(\Omega^+)} \Big]\,.
\end{align*}
Moreover, since $ \ell \le \rk$, by Lemma \ref{prop:HkHl_product} with $\epsilon = 1/8$,
\begin{align*}
& \big\|\bp^{\ell-1-j} \nabla^j ( \xi  b^{\n\n},_{\n} U,_{\n})\big\|_{L^2(B^+_m)} 
+ \sum_{(r,s)\ne (\n,\n)} \big\|\bp^{\ell-1-j} \nabla^j b^{rs},_{r} U,_s\big\|_{L^2(B^+_m)} \\
&\le C \sum_{r=0}^{\ell-1} \| \nabla ^{\ell-r} a D^{r+1} \u^\epsilon \|_{L^2(\Omega^+)} 
\le C \sum_{r=1}^{\ell} \|\nabla ^{\ell+1-r} a D^r \u^\epsilon \|_{L^2(\Omega^+)}
\le C_\epsilon \|a\|_{H^\rk(\Omega^+)} \|\u^ \epsilon \|_{H^{\ell+\frac{7}{8}}(\Omega^+)}\,.
\end{align*}
Finally, by Corollary \ref{lem:useful_lemma_with_Sobolev_class_coeff1} with $\epsilon = 1/8$,
\begin{align*}
\big\|\comm{\bp^{\ell-1-j}\nabla^j}{ \xi  b^{\n\n} } U,_{\n\n}\big\|_{L^2(B^+_m)} 
&+ \sum_{(r,s) \ne (\n,\n)} \big\| \comm{\bp^{\ell-1-j}\nabla^j}{ \xi  b^{rs}} U,_{rs}\big\|_{L^2(B^+_m)} \\
&\le C_\epsilon \|a\|_{H^\rk(\Omega)} \|\u^\epsilon\|_{H^{\ell+\frac{7}{8}}(\Omega^+)}\,.
\end{align*}
Therefore, for $0 \le j \le \ell -1$,  letting $\bp^{\ell-1-j}\nabla^j$ act on (\ref{pf_of_thm12.12_id2}),
\begin{equation}\label{pf_of_thm12.12_id3}
\xi  b^{\n\n} \bp^{\ell-1-j}\nabla^j U,_{\n\n} = \text{\bf\emph{G}}_{(\ell,j)} - \sum_{(r,s) \ne (\n,\n)} \xi  b^{rs} 
\bp^{\ell-1-j}\nabla^jU,_{rs}
\end{equation}
for a function $\text{\bf\emph{G}}_{(\ell,j)}$ satisfying
$$
\|\text{\bf\emph{G}}_{(\ell,j)}\|_{L^2(B^+_m)} \le C \Big[\|\f\|_{H^{\ell-1}(\Omega^+)} 
+ \|a\|_{H^\rk(\Omega^ + )} \|\u^\epsilon\|_{H^{\ell+\frac{7}{8}}(\Omega^+)} \Big]\,.
$$
Now we argue by induction on $0\le j \le \ell -1$. 
By (\ref{brs_elliptic}), 
$b^{\n\n} \ge \smallexp{$\frac{\lambda}{2}$}$ so that when $j=0$, the inequalities 
(\ref{vector-valued_elliptic_est_temp2}) and (\ref{pf_of_thm12.12_id3}) show that
\begin{align*}
& \big\| \xi  \bp^{\ell-1} U,_{\n\n}\big\|_{L^2(B^+_m)} \le  \|\text{\bf\emph{G}}_{(\ell,j)}\|_{L^2(B^+_m)} + \sum_{(r,s) \ne (\n,\n)}
 \|b^{rs} \|_{L^\infty(B^+_m)} \big\| \xi  \p^{\ell-1} U,_{rs}\big\|_{L^2(B^+_m)}
 \\
&\qquad \qquad \smallexp{$\displaystyle\qquad \le C \Big[ \|\f\|_{H^{\ell-1}(\Omega^+)} 
+ \|\g\|_{H^{\ell-0.5}(\Gamma)} + \|a\|_{H^\rk(\Omega^+)} \|\u^\epsilon\|_{H^{\ell+\frac{7}{8}}(\Omega^+)} \Big]$}
\end{align*}
which, combined with (\ref{vector-valued_elliptic_est_temp2}), provides the estimate
\begin{align*}
\big\| \xi  \bp^{\ell-1} \nabla^2 U\big\|_{L^2(B^+_m)} 
\le C \Big[ \|\f\|_{H^{\ell-1}(\Omega^+)} + \|\g\|_{H^{\ell-0.5}(\Gamma)} + \|a\|_{H^\rk(\Omega^+)} \|\u^\epsilon\|_{H^{\ell+\frac{7}{8}}(\Omega^+)} \Big]
\end{align*}
Repeating this process for $j=1,\cdots,\ell$ and including the analysis in the $-$-phase, we conclude that
\begin{align} 
 \big\| \xi \nabla^{\ell+1} U_\pm\big\|_{L^2(B^\pm_m)} 
 \le C \Big[ \|\f_\pm\|_{H^{\ell-1}(\Omega^\pm)} + \|\g\|_{H^{\ell-0.5}(\Gamma)} + \|a_\pm\|_{H^\rk(\Omega^\pm)} \|\u_\pm^\epsilon\|_{H^{\ell+\frac{7}{8}}(\Omega^\pm)} \Big]\,.
 \label{vector-valued_ellptic_bdy_est_with_Sobolev_coeff}
\end{align} 

\vspace{.1 in}
\noindent 
{\bf Step 3: Completing the regularity theory.}
Let $\chi_\pm \ge 0$ be in $\mC^ \infty _c(\Omega^\pm)$ so that $\supp(\chi_\pm) \cptsubset \Omega^\pm$. 
Repeating the computations above, we find that
\begin{equation}\label{vector-valued_ellptic_int_est_with_Sobolev_coeff}
 \|\chi_\pm \nabla^{\ell+1} \u_\pm^\epsilon\|_{L^2(\Omega^\pm)}
\le C \Big[ \|\f_\pm\|_{H^{\ell-1}(\Omega^\pm)} + \|a_\pm\|_{H^\rk(\Omega^\pm)} \|\u_\pm^\epsilon\|_{H^{\ell+\frac{7}{8}}(\Omega^\pm)} \Big] \,.
\end{equation}
The inequalities
(\ref{vector-valued_ellptic_bdy_est_with_Sobolev_coeff}) and (\ref{vector-valued_ellptic_int_est_with_Sobolev_coeff}) establishes the
inequality
\begin{equation}\label{vector-valued_ellptic_est_with_Sobolev_coeff}
\|\u_\pm^\epsilon\|_{H^{\ell+1}(\Omega^\pm)} \le C \Big[
\|\f_\pm\|_{H^{\ell-1}(\Omega^\pm)} + \|\g\|_{H^{\ell-0.5}(\Gamma)} 
+ \|a_\pm\|_{H^\rk(\Omega^\pm)} \|\u_\pm^\epsilon\|_{H^{\ell+\frac{7}{8}}(\Omega^\pm)} \Big] \,.
\end{equation}
Since
\begin{align*}
\|\u_\pm^\epsilon\|_{H^{\ell+\frac{7}{8}}(\Omega^\pm)} &\le C \|\u_\pm^\epsilon\|^{1-\frac{1}{8\ell}}_{H^{\ell+1}(\Omega^\pm)} 
\|\u_\pm^\epsilon\|^{\frac{1}{8\ell}}_{H^1(\Omega^\pm)} \,,
\end{align*}
 Young's inequality shows that
\begin{align*}
&\|\u_\pm^\epsilon\|_{H^{\ell+1}(\Omega^\pm)} \\
&\ \le C_\delta \Big[\|\f_\pm\|_{H^{\ell-1}(\Omega^\pm)} + \|\g\|_{H^{\ell-0.5}(\Gamma)} 
+ \P\big(\|a_\pm\|_{H^\rk(\Omega^\pm)}\big) \|\u_\pm^\epsilon\|_{H^1(\Omega^\pm)} \Big] 
+ \delta \|\u_\pm^\epsilon\|_{H^{\ell+1}(\Omega^\pm)}
\end{align*}
for some polynomial  function $\P$. Finally, the inequality  (\ref{vector-valued_elliptic_regularity})  is established by choosing $\delta > 0$
sufficiently small,  letting $\epsilon\to 0$,  and using the a priori  $H^1$-estimate.
\end{proof}

\appendix
\section{Some basic inequalities}
\begin{lemma}\label{prop:HkHl_product}
For $\rk > \novertwo$ and $0 \le \ell\le \rk$,
let $\rO\subseteq \Rn$ be a bounded smooth domain. Then for all $\epsilon \in \big(0,\smallexp{$\displaystyle{}\frac{1}{4}$}\big)$, there exists a constant $C_\epsilon$ depending on $\epsilon$ such that for all $f\in H^\rk(\rO)$ and $g\in H^{\ell- \epsilon }(\rO)$,
\begin{equation}\label{commutator_estimate_elliptic_est_temp}
\sum_{j=1}^\ell \|\nabla ^j f\nabla ^{\ell - j} g\|_{L^2(\rO)} \le C_\epsilon \|f\|_{H^\rk(\rO)} \|g\|_{H^{\ell-\epsilon}(\rO)} \,.
\end{equation}
\end{lemma}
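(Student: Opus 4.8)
The plan is to establish the estimate termwise and then sum: for each fixed $j \in \{1,\dots,\ell\}$ I would bound $\|\nabla^j f\,\nabla^{\ell-j} g\|_{L^2(\rO)}$ by a product of the two stated norms. The elementary tools are \Holder's inequality followed by the Sobolev embedding theorem on the bounded smooth domain $\rO$ (or, equivalently, a bounded extension to $\Rn$ and the embedding there). The guiding observation is that $\nabla^j f \in H^{\rk-j}(\rO)$ and $\nabla^{\ell-j} g \in H^{j-\epsilon}(\rO)$, where $\rk-j \ge \rk-\ell \ge 0$ and $j-\epsilon \ge 1-\epsilon > 0$; both factors therefore lie in Sobolev spaces of nonnegative order, and those orders add up to exactly $\rk-\epsilon$.

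First I would fix, for each $j$, a \Holder\ pair $p_j,q_j \in [2,\infty]$ with $\tfrac{1}{p_j}+\tfrac{1}{q_j}=\tfrac12$ and write
\[
\|\nabla^j f\,\nabla^{\ell-j} g\|_{L^2(\rO)} \le \|\nabla^j f\|_{L^{p_j}(\rO)}\,\|\nabla^{\ell-j} g\|_{L^{q_j}(\rO)}\,.
\]
Then I would invoke $H^s(\rO)\hookrightarrow L^p(\rO)$, valid whenever $\tfrac1p \ge \tfrac12-\tfrac{s}{\n}$ (with $p<\infty$ at the borderline $s=\n/2$, and $L^\infty$ available when $s>\n/2$), applied with $s=\rk-j$ for the first factor and $s=j-\epsilon$ for the second. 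This forces the exponent constraints $\tfrac{1}{p_j}\ge \tfrac12-\tfrac{\rk-j}{\n}$ and $\tfrac{1}{q_j}\ge \tfrac12-\tfrac{j-\epsilon}{\n}$, whose right-hand sides sum to at most $1-\tfrac{\rk-\epsilon}{\n}$. A compatible choice of $(p_j,q_j)$ therefore exists precisely when $1-\tfrac{\rk-\epsilon}{\n}\le \tfrac12$, i.e. when $\rk-\epsilon\ge \n/2$.

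The crucial point — and essentially the only input — is that this last inequality holds with room to spare: since $\rk$ is an integer with $\rk>\n/2$, it exceeds $\n/2$ by at least $\tfrac12$, while $\epsilon<\tfrac14$, so $\rk-\epsilon>\n/2$ strictly. This slack lets me select each pair $(p_j,q_j)$ so that the Sobolev embeddings used stay away from the critical order $s=\n/2$ (the one value at which the embedding can fail into $L^\infty$), yielding $\|\nabla^j f\|_{L^{p_j}(\rO)}\le C\|f\|_{H^\rk(\rO)}$ and $\|\nabla^{\ell-j} g\|_{L^{q_j}(\rO)}\le C_\epsilon\|g\|_{H^{\ell-\epsilon}(\rO)}$; summing over the finitely many $j$ then gives the claim. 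The step needing the most care is the endpoint term $j=\ell$ in the case $\ell=\rk$, where $\nabla^\ell f$ lies only in $L^2$, forcing $p_\ell=2$ and hence $q_\ell=\infty$, so that the bound rests on $g\in H^{\ell-\epsilon}\hookrightarrow L^\infty$; this embedding is exactly what $\ell-\epsilon\ge\rk-\epsilon>\n/2$ guarantees. The same caution applies whenever one factor sits near the critical regularity $\n/2$, and in each such case the (fractional) Sobolev embedding on $\rO$ supplies the required bound, so no genuine obstruction arises.
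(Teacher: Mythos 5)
Your proposal is correct and follows essentially the same route as the paper: a termwise \Holder\ inequality with $j$-dependent exponents followed by Sobolev embedding, the only delicate points being the $L^\infty$ endpoints, which you handle correctly using the slack $\rk-\epsilon>\novertwo$. The paper organizes the same idea as an explicit case analysis on $j$ (including a Gagliardo--Nirenberg interpolation plus Young's inequality step in the sub-case $\novertwo < j \le \rk-\novertwo$ when $\n<\ell\le\rk$), whereas your uniform choice of finite \Holder\ exponents sidesteps that interpolation; both arguments are valid.
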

\begin{proof}
We estimate $ \nabla ^j f  \nabla ^{\ell-j} g$ for $j=1,\cdots,\ell$ as follows:

\vspace{.05 in}
\noindent
{\bf Step 1.} If $1\le j\le \novertwo$, by the Sobolev inequalities
  \begin{align*}
  \|w\|_{L^\frac{\n}{j-\epsilon}(\rO)} \hspace{-2pt}&\le 
  C_\epsilon \|w\|_{H^{\frac{\n}{2} - j + \epsilon}(\rO)} \quad\text{(\hspace{1pt}if $0< \epsilon < 1$)}\,,\\
  \|w\|_{L^\frac{2\n}{\n-2(j-\epsilon)}(\rO)} \hspace{-2pt}&\le C \|w\|_{H^{j-\epsilon}(\rO)}\,,
  \end{align*}
  we find that
  \begin{align*}
  \|\nabla ^j f\nabla ^{\ell-j} g\|_{L^2(\rO)} &\le \|\nabla ^j f\|_{L^\frac{\n}{j-\epsilon}(\rO)} \|\nabla ^{\ell-j} g\|_{L^\frac{2\n}{\n-2(j-\epsilon)}(\rO)} \\
  &\le C_\epsilon \|f\|_{H^{\frac{\n}{2}+\epsilon}(\rO)} \|g\|_{H^{\ell-\epsilon}(\rO)} \,.
  \end{align*}

\vspace{.05 in}
\noindent
{\bf Step 2.}  If $j = \ell$, by the Sobolev inequality
  $$
  \qquad\qquad\quad \|w\|_{L^\infty(\rO)} \hspace{-3pt}\le C_\epsilon \|w\|_{H^{\frac{\n}{2} + \epsilon}(\rO)}\,,
  $$
  we find that
  $$
  \|\nabla ^j f  \nabla ^{\ell-j} g\|_{L^2(\rO)} 
  \le C_\epsilon \|f\|_{H^\ell(\rO)} \|g\|_{H^{\frac{\n}{2} + \epsilon}(\rO)} \,.
  $$

\vspace{.05 in}
\noindent
{\bf Step 3.}  If $\novertwo < j < \ell$ {\rm(}this happens only when $\novertwo < \ell \le \rk${\rm)}, we consider the following two sub-cases:

\vspace{.05 in}
\noindent
{\it Case A: $\ell \le \n$}\,: Similar to the previous case, by the Sobolev inequalities
  $$
  \|w\|_{L^\frac{2\n}{\n - 2(\ell-j)}(\rO)} \hspace{-3pt}\le C \|w\|_{H^{\ell-j}(\rO)} \ \ \text{and}\ \
  \|w\|_{L^\frac{\n}{\ell-j}(\rO)} \hspace{-3pt}\le C \|w\|_{H^{\frac{\n}{2} - \ell + j}(\rO)} \,,
  $$
 and hence,  we obtain that
  \begin{align}
   \| \nabla ^j f \nabla ^{\ell-j} g\|_{L^2(\rO)} &\le \|\nabla ^j f\|_{L^\frac{2\n}{n - 2(\ell-j)}(\rO)} \|\nabla ^{\ell-j} g\|_{L^\frac{\n}{\ell-j}(\rO)} 
  \le C \|f\|_{H^\ell(\rO)} \|g\|_{H^{\frac{\n}{2}}(\rO)}\,. \nonumber
  \end{align}
  
\vspace{.05 in}
\noindent
{\it Case B: $\n < \ell \le \rk$}\,: If $j > \rk- \novertwo$\,, by the Sobolev inequalities
  $$
  \|w\|_{L^\frac{2\n}{\n - 2(\rk-j)}(\rO)} \hspace{-3pt}\le C \|w\|_{H^{\rk-j}(\rO)}\ \ \text{and}\ \
  \|w\|_{L^\frac{\n}{\rk-j}(\rO)} \hspace{-3pt}\le C \|w\|_{H^{\frac{\n}{2} - \rk + j}(\rO)}\,,
  $$
  we obtain that
  \begin{align}
   \|\nabla ^j f \nabla ^{\ell-j} g\|_{L^2(\rO)} &\le \| \nabla ^j f\|_{L^\frac{2\n}{n - 2(\rk-j)}(\rO)} \| \nabla^{\ell-j} g\|_{L^\frac{\n}{\rk-j}(\rO)} 
  \le C \|f\|_{H^\rk(\rO)} \|g\|_{H^{\frac{\n}{2}-\rk+\ell}(\rO)}\,. \nonumber
  \end{align}
   Now suppose that $\novertwo < j \le \rk - \novertwo$\,. Note that if $0 < \epsilon < \smallexp{$\displaystyle{}\frac{1}{2}$}$,
  \begin{align*}
   \|w\|_{H^{\frac{\n}{2} + \epsilon}(\rO)} \hspace{-2pt}&\le C_\epsilon \|w\|_{W^{j,\infty}(\rO)} \le C_\epsilon \|w\|_{H^\rk(\rO)} \,, \\
   \|w\|_{H^{\frac{\n}{2}-\rk+\ell}(\rO)} \hspace{-2pt}&\le C \|w\|_{H^{\ell-j}(\rO)} \le C \|w\|_{H^{\ell-\epsilon}(\rO)}\,.
  \end{align*}
 Therefore, by the Gagliardo-Nirenberg-Sobolev interpolation inequality, we obtain that
  \begin{align*}
  \| \nabla ^j f \nabla ^{\ell-j} g\|_{L^2(\rO)} &\le \|f\|_{W^{j,\infty}(\rO)} \|g\|_{H^{\ell-j}(\rO)} \\
   & \le C_\epsilon \|f\|^{1-\alpha_j}_{H^{\frac{\n}{2}+\epsilon}(\rO)} \|f\|^{\alpha_j}_{H^\rk(\rO)} \|g\|^{\alpha_j}_{H^{\frac{\n}{2}-\rk+\ell}(\rO)} \|g\|^{1-\alpha_j}_{H^{\ell-\epsilon}(\rO)}
  \end{align*}
  for some  $\alpha_j \in (0,1)$; hence, by Young's inequality, 
  \begin{align*}
   \| \nabla ^j f \nabla ^{\ell-j}g\|_{L^2(\rO)} 
   \le C_\epsilon \Big[\|f\|_{H^{\frac{\n}{2}+\epsilon}(\rO)} \|g\|_{H^{\ell-\epsilon}(\rO)} + \|f\|_{H^\rk(\rO)} \|g\|_{H^{\frac{\n}{2}-\rk+\ell}(\rO)} \Big]\,.
  \end{align*}

Summing over $\ell$,
we conclude that for $0 < \epsilon < \smallexp{$\displaystyle{}\frac{1}{2}$}$,
\begin{align*}
 \sum_{j=1}^\ell \| \nabla ^j f \nabla ^{\ell-j} g\|_{L^2(\rO)} 
 \le \left\{\begin{array}{ll}
C_\epsilon \|f\|_{H^{\frac{\n}{2}+\epsilon}(\rO)} \|g\|_{H^{\ell - \epsilon}(\rO)} & \text{if $\ell \le \novertwo$}\,,\vspace{.2cm}\\
C_\epsilon \Big[\|f\|_{H^{\frac{\n}{2}+\epsilon}(\rO)} \|g\|_{H^{\ell-\epsilon}(\rO)} + \|f\|_{H^\rk(\rO)} \|g\|_{H^{\frac{\n}{2}+\epsilon}(\rO)} \Big] & \text{otherwise}\,.
\end{array}
\right.
\end{align*}
Estimate (\ref{commutator_estimate_elliptic_est_temp}) is then obtained from the fact that for 
all $\epsilon \in \big(0,\smallexp{$\displaystyle{}\frac{1}{4}$}\big)$, 
$$
\frac{\n}{2} + \epsilon \le \rk \quad\text{and}\quad
\frac{\n}{2} + \epsilon \le \ell - \epsilon \text{ \ if (in addition)\ }\ell > \frac{\n}{2}\,.
$$
\end{proof}

\begin{corollary}\label{lem:useful_lemma_with_Sobolev_class_coeff1} For any $m \in \{ 1, ..., K\}$, and 
for $F \in H^\rk(B^\pm_m)$ and $G = H^ {\ell- \epsilon }(B^\pm_m)$ with $0< \epsilon < 1/4$ and $1\le \ell \le \rk$,
  \begin{equation}\label{commutator_estimate_elliptic_est2}
   \big\|\comm{\bp^\ell}{F} G\big\|_{L^2(B^\pm_m)} \le C_\epsilon \|F\|_{H^\rk(B^\pm_m)} \|G\|_{H^{\ell-\epsilon}(B^\pm_m)}\,,
  \end{equation}
   where $\comm{\bp^\ell}{F} G = \bp^\ell(F G) - F \bp^\ell G$. 
\end{corollary}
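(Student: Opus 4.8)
The plan is to reduce the commutator estimate directly to the product inequality of Lemma \ref{prop:HkHl_product}. The key observation is that, by the Leibniz rule, the commutator consists of precisely those terms in the expansion of $\bp^\ell(FG)$ in which at least one derivative falls on $F$.

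First I would expand $\bp^\ell(FG)$ using the Leibniz rule for the horizontal multi-derivative $\bp = (\p_1,\dots,\p_{\n-1})$. Applied componentwise to each horizontal multi-index of order $\ell$, the product rule yields schematically
$$
\bp^\ell(FG) = \sum_{j=0}^\ell \binom{\ell}{j}\, \bp^j F\, \bp^{\ell-j}G \,,
$$
where each summand is the appropriate contraction of horizontal derivatives, i.e.\ exactly the quantities $\bp^j F\,\bp^{\ell-j}G$ in the contraction notation introduced just before (\ref{vector-valued_elliptic_proof_id1}). The $j=0$ term equals $F\,\bp^\ell G$, which is precisely what is subtracted in the definition of the commutator; the cancellation is exact, so
$$
\comm{\bp^\ell}{F}G = \sum_{j=1}^\ell \binom{\ell}{j}\, \bp^j F\, \bp^{\ell-j}G \,.
$$

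Next I would bound each summand in $L^2(B^\pm_m)$. Since $\bp^j$ consists only of $j$-th order horizontal partial derivatives, each component of $\bp^j F$ is among the components of the full gradient $\nabla^j F$, and likewise for $\bp^{\ell-j}G$; hence pointwise $|\bp^j F\,\bp^{\ell-j}G| \le C\,|\nabla^j F|\,|\nabla^{\ell-j}G|$. Taking $L^2$ norms and summing over $1\le j\le\ell$ therefore gives
$$
\big\|\comm{\bp^\ell}{F}G\big\|_{L^2(B^\pm_m)} \le C\sum_{j=1}^\ell \big\|\nabla^j F\,\nabla^{\ell-j}G\big\|_{L^2(B^\pm_m)} \,.
$$
Finally, applying Lemma \ref{prop:HkHl_product} with $\rO = B^\pm_m$ (whose hypotheses $\rk>\novertwo$, $1\le\ell\le\rk$, $0<\epsilon<1/4$ are all in force) bounds the right-hand side by $C_\epsilon\,\|F\|_{H^\rk(B^\pm_m)}\,\|G\|_{H^{\ell-\epsilon}(B^\pm_m)}$, which is exactly (\ref{commutator_estimate_elliptic_est2}).

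There is no serious analytic obstacle here: the substantive work, namely the Sobolev and Gagliardo--Nirenberg interpolation estimates that treat the ranges $j\le\novertwo$, $j=\ell$, and $\novertwo<j<\ell$ separately, has already been carried out in Lemma \ref{prop:HkHl_product}. The only points requiring care are the multi-index bookkeeping confirming that the $j=0$ Leibniz term cancels exactly (leaving only terms with at least one derivative on $F$), and the elementary observation that horizontal derivatives are dominated by full gradients, so that the already-proved product estimate applies verbatim. Thus this corollary follows as a direct consequence.
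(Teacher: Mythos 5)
Your proof is correct and is essentially the argument the paper intends: the corollary is stated without a written proof precisely because it reduces, via the Leibniz expansion in which the $j=0$ term cancels, to the sum $\sum_{j=1}^\ell\|\nabla^j F\,\nabla^{\ell-j}G\|_{L^2}$ controlled by Lemma \ref{prop:HkHl_product}. Your only additional (and harmless) observations are the pointwise domination of horizontal derivatives by full gradients and the applicability of the lemma on the half-balls $B^\pm_m$, both of which the paper takes for granted.
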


\vspace{.1in}

\noindent {\bf Acknowledgments.} DC was supported by the Centre for Analysis and Nonlinear PDEs funded by the UK EPSRC grant EP/E03635X and the Scottish Funding Council. SS was supported by the National Science Foundation under grants DMS-1001850 and DMS-1301380,
and by the Royal Society Wolfson Merit Award.   We thank Peter Constantin for discussing this problem with us, and for suggesting improvements on an early draft.

\bibliography{vortexpatch}

\begin{thebibliography}{10}

\bibitem{BaKe2014}
H.~{Bae} and J.~P {Kelliher}.
\newblock {The vortex patches of Serfati}.
\newblock {\em ArXiv e-prints}, September 2014.

\bibitem{BeCo1993}
A.~L. Bertozzi and P.~Constantin.
\newblock Global regularity for vortex patches.
\newblock {\em Comm. Math. Phys.}, 152(1):19--28, 1993.

\bibitem{Chemin1993}
Jean-Yves Chemin.
\newblock Persistance de structures g\'eom\'etriques dans les fluides
  incompressibles bidimensionnels.
\newblock {\em Ann. Sci. \'Ecole Norm. Sup. (4)}, 26(4):517--542, 1993.

\bibitem{Chemin1995}
Jean-Yves Chemin.
\newblock {\em Perfect incompressible fluids}, volume~14 of {\em Oxford Lecture
  Series in Mathematics and its Applications}.
\newblock The Clarendon Press, Oxford University Press, New York, 1998.
\newblock Translated from the 1995 French original by Isabelle Gallagher and
  Dragos Iftimie.

\bibitem{CoTi1988}
P.~Constantin and E.~S. Titi.
\newblock On the evolution of nearly circular vortex patches.
\newblock {\em Comm. Math. Phys.}, 119(2):177--198, 1988.

\bibitem{Co2015}
Peter Constantin.
\newblock {Far-field perturbations of vortex patches}.
\newblock {\em {Philosophical Transactions of the Royal Society A-Mathematical
  Physical and Engineering Sciences}}, {373}({2050}), {SEP 13} {2015}.

\bibitem{CoSh2007}
Daniel Coutand and Steve Shkoller.
\newblock Well-posedness of the free-surface incompressible {E}uler equations
  with or without surface tension.
\newblock {\em J. Amer. Math. Soc.}, 20(3):829--930, 2007.

\bibitem{GaSa1995}
Pascal Gamblin and Xavier Saint~Raymond.
\newblock On three-dimensional vortex patches.
\newblock {\em Bull. Soc. Math. France}, 123(3):375--424, 1995.

\bibitem{La2005}
David Lannes.
\newblock Well-posedness of the water-waves equations.
\newblock {\em J. Amer. Math. Soc.}, 18(3):605--654 (electronic), 2005.

\bibitem{MaBe2002}
Andrew~J. Majda and Andrea~L. Bertozzi.
\newblock {\em Vorticity and incompressible flow}, volume~27 of {\em Cambridge
  Texts in Applied Mathematics}.
\newblock Cambridge University Press, Cambridge, 2002.

\bibitem{Serfati}
Philippe Serfati.
\newblock Une preuve directe d'existence globale des vortex patches {$2$}{D}.
\newblock {\em C. R. Acad. Sci. Paris S\'er. I Math.}, 318(6):515--518, 1994.

\bibitem{ShZe2008}
Jalal Shatah and Chongchun Zeng.
\newblock Geometry and a priori estimates for free boundary problems of the
  {E}uler equation.
\newblock {\em Comm. Pure Appl. Math.}, 61(5):698--744, 2008.

\bibitem{Zeidler1986}
Eberhard Zeidler.
\newblock {\em Nonlinear functional analysis and its applications. {I}}.
\newblock Springer-Verlag, New York, 1986.
\newblock Fixed-point theorems, Translated from the German by Peter R. Wadsack.

\end{thebibliography}
\bibliographystyle{plain}

\end{document}